\newtheorem{theorem}{Theorem}
\newtheorem{corollary}[theorem]{Corollary}
\newtheorem{definition}[theorem]{Definition}
\newtheorem{lemma}[theorem]{Lemma}
\begin{document}

\title{Commutator estimates in $W^*$-factors}

\author{A. F. Ber}
\address{Department of Mathematics, Tashkent State University, Uzbekistan }
\email{ber@ucd.uz}

\author{F. A. Sukochev}
\address{School of Mathematics and Statistics, University of New South Wales, Sydney, NSW 2052, Australia }
\email{f.sukochev@unsw.edu.au}


\bigskip
\begin{abstract}
\noindent Let $\mathcal{M}$ be a $W^*$-factor and let $S\left( \mathcal{M}
\right) $ be the space of all measurable operators affiliated with $\mathcal{M}$. It is shown that
for any self-adjoint element $a\in S(\mathcal{M})$ there exists a scalar
$\lambda_0\in\mathbb{R}$, such that for all $\varepsilon
> 0$, there exists a unitary element $u_\varepsilon$ from $\mathcal{M}$,
satisfying $|[a,u_\varepsilon]| \geq
(1-\varepsilon)|a-\lambda_0\mathbf{1}|$. A corollary of this
result is that for any derivation $\delta$ on  $\mathcal{M}$ with
the range in an ideal $I\subseteq\mathcal{M}$, the derivation
$\delta$ is inner, that is $\delta(\cdot)=\delta_a(\cdot)=[a,\cdot]$, and $a\in I$.
Similar results are also obtained for inner
derivations on $S(\mathcal{M})$.
\end{abstract}

\keywords{Derivations in von Neumann algebras, measurable operators, ideals of compact operators}

\subjclass{46L57, 46L51, 46L52}

\maketitle

\section{Introduction}

Let $\mathcal{M}$ be a $W^*$-algebra and $\mathcal{N}$
its $W^*$-subalgebra, let $I$ be an ideal in
$\mathcal{M}$ and let $\delta$ be a derivation on $\mathcal{N}$ with the range in an ideal $I$.
The problem studied in \cite{JP,KW,PR} can be stated as follows: \textit{What are the conditions on
$\mathcal{M}$, $\mathcal{N}$ and $I$ which guarantee that $\delta(\cdot) = \delta_a(\cdot):= [a,\cdot]$, where $a\in I$?}
In the present article, we show that the answer is affirmative when $\mathcal{N}=\mathcal{M}$ is an arbitrary $W^*$-factor and $I$ is an arbitrary ideal in $\mathcal{M}$ (see Corollaries \ref{c1},\ref{c11}). Our methods are completely different from the methods employed in  \cite{JP,KW,PR} and are strong enough to enable us (see Corollaries \ref{c2},\ref{c3}) to also treat an analogous question in a much more general setting of the theory of non-commutative integration on von Neumann algebras, initiated by I.E. Segal \cite{Se} (for alternative approach to this theory, see E. Nelson's paper \cite{Ne}). All necessary definitions will be given in the next section.

Recall that the classical algebras of measurable operators associated with a von Neumann algebra $\mathcal{M}$ and/or with a pair $\left( \mathcal{M},\tau \right) $ consisting of a semi-finite von Neumann algebra $\mathcal{M}$ and a faithful normal semi-finite trace $\tau$ are the following:
\begin{enumerate}
\item[(i)] the space of all measurable operators $S\left( \mathcal{M}
\right) $ \cite{Se};

\item[(ii)] the space $S\left( \mathcal{M},\tau
\right) $ of all $\tau $-measurable operators \cite{Ne}.
\end{enumerate}

\noindent It should be noted that  we always have $S\left( \mathcal{M},\tau\right)  \subseteq S\left( \mathcal{M}\right)$, but in the important case when
 $\mathcal{M}$ is a semi-finite factor (respectively, of type $I$ or $III$), we have $S\left( \mathcal{M}
\right)=S\left( \mathcal{M},\tau\right)$ (respectively, $S\left( \mathcal{M},\tau\right)=\mathcal{M}$).

Our main result in this paper is the following theorem.

\begin{theorem}
\label{main} Let $\mathcal{M}$ be a $W^*$-factor and let $a=a^*\in S(\mathcal{M})$.
\begin{enumerate}[label={(\roman*).}]
  \item If $\mathcal{M}$ is a finite factor or else a purely infinite $\sigma$-finite factor, then there exists $\lambda_0\in\mathbb{R}$ and
$u_0=u_0^*\in U(\mathcal{M})$, such that
\begin{equation}\label{first main}|[a,u_0]|=u_0^*|a-\lambda_0\mathbf{1}|u_0 +
|a-\lambda_0\mathbf{1}|,
\end{equation}
where $U(\mathcal{M})$ is a group of all unitary operators in
$\mathcal{M}$;
    \item There exists $\lambda_0\in\mathbb{R}$, so that for any $\varepsilon > 0$ there exists $u_\varepsilon=u_\varepsilon^* \in
U(\mathcal{M})$ such that \begin{equation}\label{second main}
|[a,u_\varepsilon]| \geq (1-\varepsilon)|a-\lambda_0\mathbf{1}|.
\end{equation}
\end{enumerate}
If $\mathcal{M}$ is an infinite semi-finite $\sigma$-finite factor, then the result stated in (ii) above is sharp. More precisely, in this case
there exists $0\leq a\in S(\mathcal{M})$ such that for all
$\lambda\in\mathbb{C}$ and all $u\in U(\mathcal{M})$ the inequality
$|[a,u]|\geq |a-\lambda\mathbf{1}|$ fails.
\end{theorem}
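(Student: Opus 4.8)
Fix a faithful normal semi-finite trace $\tau$ on $\mathcal{M}$; since $\mathcal{M}$ is an infinite semi-finite $\sigma$-finite factor it is of type $\mathrm{I}_\infty$ or $\mathrm{II}_\infty$. The plan is to exhibit a single bounded $a$ that defeats every $\lambda$ and every $u$ by encoding enough ``compactness''. First I would fix a decomposition $\mathbf{1}=\sum_{n\geq 1}p_n$ into countably many pairwise orthogonal nonzero \emph{finite} projections $p_n\in\mathcal{M}$: a maximal orthogonal family of finite projections sums to $\mathbf{1}$ by semi-finiteness, is countable by $\sigma$-finiteness, and is infinite because $\mathcal{M}$ is infinite. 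Then set
\[
a:=\sum_{n\geq 1}\frac{1}{n}\,p_n\in\mathcal{M}\subseteq S(\mathcal{M}).
\]
Thus $0\leq a\leq\mathbf{1}$, $\|a\|=1$ (attained on $p_1$), $\ker a=\{0\}$ (since $\sum p_n=\mathbf{1}$), and $a$ lies in $K(\mathcal{M})$, the norm-closed two-sided ideal of $\mathcal{M}$ generated by the finite projections (the ``compact operators'' relative to $\mathcal{M}$), because $a$ is the norm limit of the finite-support operators $\sum_{n\leq N}\tfrac1n p_n$. I will use the standard facts that $K(\mathcal{M})$ is a proper solid ideal: $\mathbf{1}\notin K(\mathcal{M})$ as $\mathbf{1}$ is infinite, and $0\leq x\leq y\in K(\mathcal{M})$ implies $x\in K(\mathcal{M})$. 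Now assume, for contradiction, that $|[a,u]|\geq|a-\lambda\mathbf{1}|$ for some $\lambda\in\mathbb{C}$ and $u\in U(\mathcal{M})$.

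If $\lambda\neq 0$, then $[a,u]=au-ua\in K(\mathcal{M})$, hence $|[a,u]|\in K(\mathcal{M})$; by solidity $|a-\lambda\mathbf{1}|\in K(\mathcal{M})$, and the polar decomposition $a-\lambda\mathbf{1}=v\,|a-\lambda\mathbf{1}|$ with $v\in\mathcal{M}$ then gives $a-\lambda\mathbf{1}\in K(\mathcal{M})$. Since $a\in K(\mathcal{M})$ this forces $\lambda\mathbf{1}=a-(a-\lambda\mathbf{1})\in K(\mathcal{M})$, contradicting $\mathbf{1}\notin K(\mathcal{M})$. So $\lambda=0$, i.e. $|[a,u]|\geq a$.

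For the borderline case $\lambda=0$, choose a unit vector $\xi\in p_1H$, so that $a\xi=\xi$. From $|[a,u]|\geq a$ we get $\langle|[a,u]|\xi,\xi\rangle\geq\langle a\xi,\xi\rangle=1$, and Cauchy--Schwarz together with $\||[a,u]|\xi\|=\|[a,u]\xi\|$ yields $\|[a,u]\xi\|\geq 1$. On the other hand $[a,u]\xi=au\xi-ua\xi=(a-\mathbf{1})(u\xi)$, and here is the crux: since $\ker a=\{0\}$, the number $-1$ lies in the spectrum of $a-\mathbf{1}$ (it is the limit of the eigenvalues $\tfrac1n-1$) but is \emph{not} an eigenvalue (the corresponding spectral projection is $\mathbf{1}_{\{0\}}(a)=0$); hence the spectral-measure estimate $\|(a-\mathbf{1})\eta\|^{2}=\int_{[-1,0]}s^{2}\,d\mu_\eta(s)<\|\eta\|^{2}$ holds for every $\eta\neq 0$. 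Taking $\eta=u\xi$ gives $\|[a,u]\xi\|<1$, a contradiction. Therefore no $\lambda\in\mathbb{C}$ and $u\in U(\mathcal{M})$ satisfy $|[a,u]|\geq|a-\lambda\mathbf{1}|$.

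The routine parts are the existence and solidity of $K(\mathcal{M})$ and of the decomposition $\mathbf{1}=\sum p_n$, plus the functional-calculus/polar-decomposition bookkeeping needed to keep things inside $K(\mathcal{M})$; the one genuinely delicate design choice is taking $a$ with trivial kernel, which is precisely what kills the value $\lambda=0$ (if $a$ had a kernel, $\|a-\mathbf{1}\|$ would be attained and the last step would fail). I expect the main obstacle, if anything, to be phrasing the $\lambda\neq 0$ step uniformly for the types $\mathrm{I}_\infty$ and $\mathrm{II}_\infty$ at once, but the ideal-theoretic formulation above already does this.
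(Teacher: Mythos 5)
Your proposal only addresses the final (sharpness) assertion; it contains no argument at all for parts (i) and (ii), which are the substance of the theorem. That is the genuine gap here: the existence of the scalar $\lambda_0$ and of the self-adjoint unitaries $u_0$, $u_\varepsilon$ is what requires the real work — in the paper this occupies the trichotomy $\Lambda_-,\Lambda_0,\Lambda_+$ of real numbers according to the comparison of $e^a(-\infty,\lambda)$ with $e^a(\lambda,+\infty)$, the equality case handled by Lemma \ref{l3} (a symmetry $v+v^*+r$ exchanging $e^a(-\infty,\lambda)+p$ and $e^a(\lambda,+\infty)+q$), and the genuinely delicate case of an infinite semi-finite factor with $\Lambda_0=\emptyset$, where one must build, via Lemmas \ref{l00}--\ref{l1}, sequences of pairwise orthogonal projections $p_n\sim q_n$ commuting with $a$ and summing to $\mathbf{1}$, and then the unitary $u_\varepsilon=\sum v_n+\sum v_n^*$ of Lemma \ref{l2}. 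None of this is sketched in your proposal, so as a proof of the stated theorem it is incomplete in an essential way.

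The part you did prove is correct, and in fact your example coincides with the paper's: $a=\sum_{n\ge1}n^{-1}p_n$ for a countable orthogonal family of finite projections with $\bigvee p_n=\mathbf{1}$, and the elimination of $\lambda\neq0$ by the ideal of ``$\tau$-compact'' operators is the same argument as in the paper (there phrased via $\mathcal{F}$ and the fact that a proper ideal can contain at most one translate $a-\lambda\mathbf{1}$; do justify $\mathbf{1}\notin K(\mathcal{M})$, e.g.\ elements of the algebraic ideal have finite right support while an element at distance $<1$ from $\mathbf{1}$ is invertible). Where you genuinely differ is the endgame for $\lambda=0$: you pick a unit vector $\xi\in p_1H$ with $a\xi=\xi$ and derive $1\le\|[a,u]\xi\|=\|(a-\mathbf{1})u\xi\|<1$ from $e^a\{0\}=0$ and the strict spectral estimate; the paper instead sets $e_\pm$ to be the supports of $(u^*au-a)_\pm$, shows $e_+=\mathbf{1}$ using $s(a)=\mathbf{1}$, deduces $u^*au\ge 2a$ and contradicts $\|a\|=1$. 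Both are valid; your vector argument is slightly more elementary (no support-projection manipulation), while the paper's is purely algebraic and does not single out a vector state. But this alternative finish does not compensate for the missing proof of (i) and (ii).
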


\section{Preliminaries}

For details on von Neumann algebra theory, the reader is referred
to e.g. \cite{Dix}, \cite{KR2}, \cite{Sak} or \cite{Tak}. General
facts concerning measurable operators may be found in \cite{Ne},
\cite{Se} (see also \cite[Chapter IX]{Ta2}). For the convenience
of the reader, some of the basic definitions are recalled.

Let $\mathcal{M}$ be a von Neumann algebra on a Hilbert space $H$
equipped with a semi-finite normal faithful trace $\tau$. The set
of all self-adjoint projections (respectively, all unitary
elements) in $\mathcal{M}$ is denoted by
$P\left(\mathcal{M}\right)$ (respectively,
$U\left(\mathcal{M}\right)$). The algebra $B(H)$ of all bounded
linear operators on $H$ is equipped with its standard trace $Tr$.
The commutant of a set $D\subset B(H)$ is denoted by $D^{\prime
}$. We use the notation $s(x),l(x),r(x)$ to denote the support,
left support, right support respectively of an element
$x\in\mathcal{M}$.

Let $p,q\in P\left(\mathcal{M}\right)$. The projections
$p$ and $q$ are said to be equivalent, if there exists
a partial isometry $v\in\mathcal{M}$, such that $v^*v=p,\ vv^*=q$. In this case, we write $p\sim q$.
The fact that the projections $p$ and $q$ are not equivalent is recorded as $p\nsim q$.
If there exists a projection $q_1\in P\left(\mathcal{M}\right)$ such that $q_1\leq p,\ q_1\sim q$, then we write
$q\preceq p$. If $q\preceq p$ and $p\nsim q$, then we employ the notation $q\prec p$.


A linear operator $x:\mathfrak{D}\left( x\right) \rightarrow
H $, where the domain $\mathfrak{D}\left( x\right) $ of $x$ is a linear
subspace of $H$, is said to be {\it affiliated} with $\mathcal{M}$ if $yx\subseteq
xy$ for all $y\in \mathcal{M}^{\prime }$ (which is denoted by $x\eta
\mathcal{M}$). A linear operator $x:\mathfrak{D}\left( x\right) \rightarrow
H $ is termed {\it measurable} with respect to $\mathcal{M}$ if $x$ is closed,
densely defined, affiliated with $\mathcal{M}$ and there exists a sequence $%
\left\{ p_{n}\right\} _{n=1}^{\infty }$ in $P\left( \mathcal{M}\right) $
such that $p_{n}\uparrow \mathbf{1}$, $p_{n}\left( H\right) \subseteq
\mathfrak{D}\left( x\right) $ and $p_{n}^{\bot }$ is a finite projection
(with respect to $\mathcal{M}$) for all $n$. It should be noted that the
condition $p_{n}\left( H\right) \subseteq \mathfrak{D}\left( x\right) $
implies that $xp_{n}\in \mathcal{M}$. The collection of all measurable
operators with respect to $\mathcal{M}$ is denoted by $S\left( \mathcal{M}%
\right) $, which is a unital $\ast $-algebra with respect to strong sums and
products (denoted simply by $x+y$ and $xy$ for all $x,y\in S\left( \mathcal{M%
}\right) $).

Let $a$ be a self-adjoint operator affiliated with $\mathcal{M}$.
We denote its spectral measure by $\{e^a\}$. It is known if $x$ is
a closed operator in $H$ with the polar decomposition $x = u|x|$
and $x\eta \mathcal{M}$, then $u\in\mathcal{M}$ and $e\in
\mathcal{M}$ for all projections $e\in \{e^{|x|}\}$. Moreover,
$x\in S(\mathcal{M})$ if and only if $x$ is closed, densely
defined, affiliated with $\mathcal{M}$ and $e^{|x|}(\lambda,
\infty)$ is a finite projection for some $\lambda> 0$. It follows
immediately that in the case when $\mathcal{M}$ is a von Neumann
algebra of type $III$ or
 a type $I$ factor, we have $S(\mathcal{M})= \mathcal{M}$. For type $II$ von Neumann algebras, this is no longer true.

An operator $x\in S\left( \mathcal{M}\right) $ is called $\tau
$-{\it measurable} if there exists a sequence
$\left\{p_{n}\right\} _{n=1}^{\infty }$ in $P\left(
\mathcal{M}\right) $ such that $p_{n}\uparrow \mathbf{1}$,
$p_{n}\left( H\right) \subseteq \mathfrak{D}\left( x\right) $ and
$\tau \left( p_{n}^{\bot }\right) <\infty $ for all $n$. The
collection $S\left( \tau \right) $ of all $\tau $-measurable
operators is a unital $\ast $-subalgebra of $S\left(
\mathcal{M}\right) $ denoted by $S\left( \mathcal{M}, \tau\right)
$. It is well known that a linear operator $x$ belongs to $S\left(
\mathcal{M}, \tau\right) $ if and only if $x\in S(\mathcal{M})$
and there exists $\lambda>0$ such that $\tau(e^{|x|}(\lambda,
\infty))<\infty$.


In this paper, we shall frequently assume that  $\mathcal{M} $ is
a factor. If $\mathcal{M}$  is a semi-finite factor with the trace
$\tau $, then the notions of $\tau$-finite and (algebraically)
finite projections coincide. An immediate corollary of this
observation is that, the algebras $S(\mathcal{M})$ and
$S(\mathcal{M},\tau)$ coincide in this setting.

\section{The proof of Theorem \ref{main}}

For better readability, we break the theorem's proof into the following series of
lemmas.

\begin{lemma}
\label{l00} Let $p,q,r\in P(\mathcal{M})$, $p<q$, $p\prec r\prec
q$. Then there exists $r_1\in P(\mathcal{M})$, such that $r_1\sim
r$ and $p<r_1<q$.
\end{lemma}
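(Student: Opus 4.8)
The statement concerns projections in a $W^*$-factor $\mathcal M$: given $p < q$ with an intermediate $r$ satisfying $p \prec r \prec q$, we want a conjugate copy $r_1 \sim r$ that is strictly sandwiched, $p < r_1 < q$. First I would reduce to working inside the ``interval'' $q - p$: since $p \prec r \prec q$, in particular $r \preceq q$, so $r$ is equivalent to some subprojection $r' \le q$; but more to the point, the natural move is to cut everything by $q$ and look for $r_1$ among projections lying below $q$ and above $p$. So the real task is: inside the reduced algebra $q\mathcal M q$ (again a $W^*$-factor, or at least a factor when $\mathcal M$ is a factor), find a projection $r_1$ with $p < r_1 < q$ and $r_1 \sim r$ in $\mathcal M$.

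The key comparison-theory input is the dichotomy for factors: for any two projections $e, f$ in a factor, either $e \preceq f$ or $f \preceq e$. Apply this inside $q\mathcal M q$ to the projections $r$ (viewed via the hypothesis $r \prec q$, so WLOG after replacing $r$ by an equivalent copy we may assume $r \le q$) and the ``complementary'' data $q - p$. The hypothesis $p \prec r$ gives $p \preceq r$ and $p \not\sim r$; the hypothesis $r \prec q$ gives $r \le q$ (after the reduction) with $r \not\sim q$, hence $q - r \ne 0$ and in fact $p \prec r$ forces the ``room'' we need. Concretely: since $p \preceq r$, write $p \sim p_1 \le r$. Since $p \ne p_1$ is possible, I want instead to build $r_1$ directly: take a partial isometry implementing $r \sim (\text{copy below } q)$ and adjust it so its range projection contains $p$ and is properly contained in $q$. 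The mechanism is the standard ``exchange''/symmetry trick: because $\mathcal M$ is a factor and $p \prec r \le q$ with $r \prec q$, both $r - (\text{copy of }p \text{ inside } r)$ and $q - r$ are nonzero, and comparability lets us move a copy of $p$ inside $r$ to sit exactly at $p$ by a unitary in $q\mathcal M q$; conjugating $r$ by that unitary produces $r_1 \sim r$ with $p \le r_1$. Strictness $p < r_1$ follows from $p \not\sim r = r_1$ (equivalent projections with $p \le r_1$ and $p = r_1$ would force $p \sim r$), and $r_1 < q$ follows from $r_1 \sim r \not\sim q$ together with $r_1 \le q$.

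More carefully, the clean route: since $p \preceq r$, fix $v$ with $v^*v = p$, $vv^* \le r$, and set $e = vv^* \le r$. Since $p \not\sim r$ we have $e \ne r$, so $r - e \ne 0$. Now $p \le q$ and $e \le r \le q$ with $p \sim e$; in the factor $q\mathcal M q$, equivalent projections $p$ and $e$ have equivalent complements relative to $q$ precisely when the algebra is finite, which is not assumed — so instead I use that $p \sim e$ implies there is a partial isometry, and I extend it: because $p \sim e$ and both sit under $q$, and because $(q - p)$ and $(q - e)$ are comparable in the factor $q\mathcal M q$, one of them embeds in the other; combined with $p\sim e$ this yields (by the Schröder–Bernstein argument for projections, valid in any von Neumann algebra) that $q - p \sim q - e$, hence a unitary $w \in q\mathcal M q$ with $wpw^* = e$, i.e. $w^* e w = p$. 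Wait — I actually need a unitary $w$ with $w p w^* = e$; then $r_1 := w^* r w$ satisfies $r_1 \sim r$, and $r_1 \ge w^* e w = p$. That is exactly what is wanted, and strictness is handled as above.

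**Main obstacle.** The delicate point is producing a genuine \emph{unitary} $w$ (not merely a partial isometry) in $q\mathcal M q$ with $wpw^* = e$; a partial isometry $v$ with $v^*v = p$, $vv^* = e$ does not extend to a unitary in general. The fix is the Schröder–Bernstein theorem for projections: from $p \preceq e$ (trivially, as $p \sim e$) and the need $q - p \preceq q - e$ or $q - e \preceq q - p$ (available by the factor dichotomy in $q \mathcal M q$), one gets $q - p \sim q - e$, and then $w = v \oplus (\text{implementing isometry of } q-p \sim q-e)$ is the desired unitary in $q\mathcal M q \subseteq \mathcal M$. I would also double-check that $r \le q$ can indeed be assumed: the hypothesis is only $r \prec q$, meaning $r \preceq q$ and $r \not\sim q$; replacing $r$ by an equivalent subprojection of $q$ is harmless since the conclusion only asks for $r_1 \sim r$. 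With that reduction in place, the rest is the comparison-theory bookkeeping sketched above.
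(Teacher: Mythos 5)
Your construction hinges on producing a unitary $w\in q\mathcal{M}q$ with $wpw^{*}=e$, and for that you need $q-p\sim q-e$. The justification you give for this equivalence --- comparability of $q-p$ and $q-e$ in the factor $q\mathcal{M}q$, combined with $p\sim e$ and ``Schr\"oder--Bernstein'' --- is a non sequitur: Schr\"oder--Bernstein requires \emph{both} $q-p\preceq q-e$ and $q-e\preceq q-p$, comparability supplies only one of these, and $p\sim e$ does not supply the other. Indeed, as a general statement about projections the claim is false: in $B(H)$ take $q=\mathbf{1}$, $p$ of infinite rank and infinite corank, and $e$ of finite corank; then $p\sim e$ while $q-p\not\sim q-e$ (equivalent projections need not be unitarily equivalent in an infinite factor). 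That particular configuration happens to be excluded by the hypotheses of the lemma (they force $p\prec q$, since $p\sim q$ together with $r\preceq q\preceq r$ would give $r\sim q$), but your argument never invokes the hypotheses at this point, so the step as written is a genuine gap. Repairing it would require a separate case analysis (the finite case via cancellation, the infinite case by showing $q-p\sim q\sim q-e$), i.e.\ precisely the sort of comparison machinery --- essentially Lemma \ref{l0}(iv) --- which the paper only develops \emph{after} this lemma; your proposal supplies none of it.

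The paper's proof sidesteps unitary conjugation entirely and is much shorter: choose $p_{1}\sim p$ with $p_{1}<r$ (possible since $p\prec r$); if $r-p_{1}\succeq q-p$ then $r=(r-p_{1})+p_{1}\succeq (q-p)+p=q$, contradicting $r\prec q$, so $r-p_{1}\prec q-p$; pick $p_{2}\sim r-p_{1}$ with $p_{2}<q-p$ and set $r_{1}=p+p_{2}$, which satisfies $r_{1}\sim p_{1}+(r-p_{1})=r$ and $p<r_{1}<q$. I recommend abandoning the conjugation route in favour of this direct additive construction.
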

\begin{proof}
There exists $p_1\in P(\mathcal{M})$, such that  $p\sim p_1<r$.
Assume that $r-p_1\succeq q-p$. Then $r=(r-p_1)+p_1\succeq
(q-p)+p=q$, which contradicts our assumption. Therefore $r-p_1\prec
q-p$. Hence, there exists $p_2\in P(\mathcal{M})$, such that
$r-p_1\sim p_2<q-p$. Then $p<p+p_2<q$ and $p+p_2\sim
p_1+(r-p_1)=r$. Setting $r_1 = p+p_2$ completes the proof.
\end{proof}

\begin{lemma}
\label{l0} Let $p$ be an infinite projection in $\mathcal{M}$. Then:
\begin{enumerate}[label={(\roman*).}]
    \item If $P(\mathcal{M}) \ni q_1,...,q_n,... \preceq p$,
$q_nq_m=0$ for all $n\neq m$, then $\bigvee_{n=1}^\infty q_n \preceq p$.
    \item If $P(\mathcal{M}) \ni q_1,...,q_n\prec p$, $q_iq_j=0$ for all
$i\neq j$, then $\bigvee_{i=1}^n q_i \prec p$.
    \item If $p\succeq \mathbf{1}-p$, then $p\sim \mathbf{1}$.
    \item If $P(\mathcal{M}) \ni q\prec p,\ qp=pq$, then
$p(\mathbf{1}-q)\sim p$.
\end{enumerate}
\end{lemma}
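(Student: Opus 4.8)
The plan is to prove the four claims about projections in a $W^*$-factor $\mathcal{M}$ in order, using only the comparison theory of projections (which is available since $\mathcal{M}$ is a factor, so any two projections are comparable) together with Lemma \ref{l00}.

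For (i), I would proceed by a telescoping/disjointification argument. Since $q_n\preceq p$ for every $n$, I want to fit disjoint copies of (enough of) the $q_n$ inside $p$. The idea is to write $p$ as an infinite orthogonal sum of subprojections each equivalent to $p$ itself: since $p$ is infinite, one can split off a sequence of mutually orthogonal subprojections $p=\sum_k p_k$ with $p_k\sim p$ for all $k$ (this is the standard halving-type construction for infinite projections in a factor). Then $q_n\preceq p\sim p_n$, so each $q_n$ is equivalent to a subprojection $\widetilde{q_n}\leq p_n$; since the $p_n$ are mutually orthogonal, so are the $\widetilde{q_n}$, and the partial isometries implementing $q_n\sim\widetilde{q_n}$ add up (strongly) to a partial isometry implementing $\bigvee_n q_n\sim \bigvee_n\widetilde{q_n}\leq p$. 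Hence $\bigvee_n q_n\preceq p$.

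For (ii), the point is to upgrade the conclusion of (i) to a strict subordination in the finite case $n<\infty$. Using (i) (or directly, finitely many times) we get $\bigvee_{i=1}^n q_i\preceq p$; I must rule out $\bigvee_{i=1}^n q_i\sim p$. Suppose it holds. Then inside $p$ we have mutually orthogonal subprojections $\widetilde{q_i}\sim q_i$ with $\sum_i\widetilde{q_i}=p$, and since each $q_i\prec p$ we have $\widetilde{q_i}\prec p$, so $p-\widetilde{q_i}\neq 0$ and in fact $p-\widetilde{q_i}\succeq$ something nonzero; I would extract from the strictness a nonzero subprojection of $p$ orthogonal to all $\widetilde{q_i}$, contradicting $\sum\widetilde{q_i}=p$. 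The cleanest route: handle $n=1$ first (that is essentially the definition of $\prec$ transported by equivalence), then induct, using Lemma \ref{l00} at the inductive step to place a copy of $q_{n+1}$ strictly between $\bigvee_{i\le n}q_i$'s image and $p$.

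For (iii), if $p\succeq\mathbf{1}-p$ then $\mathbf{1}-p$ is equivalent to a subprojection $p_0\leq p$. Write $p=p_0+(p-p_0)$; then $\mathbf{1}=p+(\mathbf{1}-p)\sim p+p_0\leq p+p = $ ... more precisely $\mathbf{1}=(\mathbf{1}-p)+p\sim p_0+p$ but $p_0+p$ need not be $\le\mathbf 1$; instead I would argue $\mathbf 1\preceq p+p$ and then use that $p$ is infinite together with the factor's comparison to get $p\sim p+p_0\succeq \mathbf 1\succeq p$, so $p\sim\mathbf 1$ by Cantor--Bernstein for projections. For (iv), with $q\prec p$, $qp=pq$, so $p-pq=p(\mathbf 1-q)$ is a subprojection of $p$ with $p-p(\mathbf 1-q)=pq\preceq q\prec p$, hence $pq$ is a "small" piece of $p$; since $p$ is infinite, removing a strictly smaller piece leaves something equivalent to the whole, i.e. $p(\mathbf 1-q)=p-pq\sim p$ — this again reduces to a Cantor--Bernstein/infinite-absorption argument. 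The main obstacle in all four parts is the careful bookkeeping of partial isometries and orthogonality when passing between "$\preceq$'' and genuine orthogonal decompositions, plus invoking the right infinite-absorption fact ($p$ infinite $\Rightarrow$ $p$ absorbs strictly smaller projections) in a factor; once that is set up, each statement follows quickly.
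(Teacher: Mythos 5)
Your part (i) reproduces the paper's argument (halve the infinite $p$ into orthogonal pieces $p_n\sim p$, map each $q_n$ into $p_n$, add the partial isometries), and that part is fine. The genuine gaps are in (ii) and (iv). In (ii), after assuming $\bigvee_{i=1}^n q_i\sim p$, your plan to ``extract a nonzero subprojection of $p$ orthogonal to all $\widetilde{q_i}$'' is a non sequitur: if $\sum_i\widetilde{q_i}=p$ with each $\widetilde{q_i}\prec p$, nothing orthogonal to all the $\widetilde{q_i}$ is left over, and ruling out precisely this configuration is what has to be proved. Your fallback induction via Lemma \ref{l00} also does not run as stated: to invoke that lemma you would need a copy of $\bigvee_{i\le n}q_i$ to be $\prec q_{n+1}$, which need not hold (the comparison can go the other way), and even when it does, the projection $r_1\sim q_{n+1}$ it produces contains the copy of $\bigvee_{i\le n}q_i$ instead of being orthogonal to it, so the pieces cannot simply be added. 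The missing idea, which is the paper's proof, is a finite/infinite dichotomy: since $\mathcal{M}$ is a factor, reorder so that $q_1\preceq\cdots\preceq q_n$; if $q_n$ is finite, then $\bigvee_i q_i$ is a finite projection and hence $\prec p$ because $p$ is infinite; if $q_n$ is infinite, apply (i) with $q_n$ in the role of $p$ to get $\bigvee_i q_i\preceq q_n\prec p$. Some use of the infiniteness of $p$ of this kind is unavoidable: in a $II_1$ factor two orthogonal projections of trace $1/2$ are each $\prec\mathbf{1}$, yet their sum is $\mathbf{1}$.

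In (iv) you appeal to ``removing a strictly smaller piece of an infinite projection leaves something equivalent to the whole'' as if it were an available absorption principle, but that principle (applied to $e:=pq\le p$) is exactly the content of (iv), so as written the argument is circular. The paper instead gets (iv) from (ii) in two lines: $p=qp+(\mathbf{1}-q)p$ with $qp\le q\prec p$; if also $(\mathbf{1}-q)p\prec p$, then (ii) would give $p\prec p$, which is absurd; hence $(\mathbf{1}-q)p\succeq p$, and since $(\mathbf{1}-q)p\le p$, Schr\"oder--Bernstein for projections yields $(\mathbf{1}-q)p\sim p$. Your (iii) is garbled in the middle ($p_0+p$ with $p_0\le p$ is not a projection), but the intended idea is salvageable; the paper's version is simply (i) applied to the orthogonal pair $p$, $\mathbf{1}-p$ (both $\preceq p$) to get $\mathbf{1}\preceq p$, followed by Schr\"oder--Bernstein.
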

\begin{proof}
(i). Since $p$ is an infinite projection, there exist pairwise disjoint projections $p_1,...,p_n,...\in P(\mathcal{M})$, such that $p=\bigvee_{n=1}^\infty p_n$, $p_n\sim p$ for all $n\in
\mathbb{N}$. Then $p_n\succeq q_n$ for all $n\in \mathbb{N}$.
Hence $p=\bigvee_{n=1}^\infty p_n\succeq \bigvee_{n=1}^\infty
q_n$.

(ii). Since $\mathcal{M}$ is a factor, every projection is comparable to
every other projection. Thus reordering if necessary, we may
assume that $q_1\preceq q_2\preceq ... \preceq q_n$. If $q_n$ is a
finite projection, then $\bigvee_{i=1}^n q_i$ is a finite and
$\bigvee_{i=1}^n q_i\prec p$. If $q_n$ is infinite, then by (i),
$\bigvee_{i=1}^n q_i \preceq q_n \prec p$.

(iii). Since $\mathbf{1}=p+(\mathbf{1}-p)$, it follows from (i) that $\mathbf{1}\preceq p$.

(iv). We have $p=qp+(\mathbf{1}-q)p,\ qp\leq q \prec p$. If it were true that $(\mathbf{1}-q)p\prec p$, then by (ii) we have $p\prec p$, which is false. Thus $(1-q)p = 1-pq \succeq p$ and certainly $(1-q)p \preceq p$. The result follows immediately.
\end{proof}

In the special case when $\mathcal{M}$ is semifinite and $a$ is positive, it may be of interest to compare the result given below with \cite[Theorem 3.5]{DDP} and \cite[Lemma 4.1]{CS}.

\begin{lemma}
\label{l1} Let $a\in S_h(\mathcal{M})$ and $p,q\in P(\mathcal{M})$, $p\succeq q$. Suppose that one of the following
conditions holds:
\begin{enumerate}[label={(\roman*).}]
    \item $q$ is finite and there exists a sequence of finite projections $\{p_n\}$ in $\mathcal{M}$ such that $p_n \uparrow p$ and $ap_n=p_n a$ for all $n\in \mathbb{N}$;
    \item $q$ is an infinite projection and $ap=pa\in \mathcal{M}$.
\end{enumerate}
Then there exists a projection $q_1\in P(\mathcal{M})$ such that $q_1\sim q$, $aq_1=q_1a$ and such that $q_1\leq p$.
\end{lemma}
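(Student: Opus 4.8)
The plan is to produce $q_1$ as a spectral-type projection for $a$ sitting underneath $p$ and equivalent to $q$. The key observation is that in both cases the hypotheses give a rich supply of $a$-invariant subprojections of $p$ that exhaust $p$, and we need to find one among them that is large enough to dominate $q$.

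Consider case (i) first. For each $n$, the projection $p_n$ commutes with $a$, hence the reduced algebra $p_n\mathcal{M}p_n$ contains $ap_n$ as a self-adjoint element, and any spectral projection $e^{ap_n}(B)$ lies in $P(\mathcal{M})$, is $\leq p_n\leq p$, and commutes with $a$. Since $p_n\uparrow p$, we have $\bigvee_n p_n=p\succeq q$. If for every $n$ we had $p_n\prec q$, then — using that $q$ is finite and, in a factor, comparability together with the fact that an increasing sequence of projections each strictly subordinate to a fixed finite projection cannot have supremum exceeding it — I would derive $p=\bigvee_n p_n\preceq q$, forcing $p\sim q$; then one can run the argument on $q$ directly. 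Otherwise there exists $n_0$ with $p_{n_0}\succeq q$. Now within $p_{n_0}\mathcal{M}p_{n_0}$ we must locate an $a$-invariant subprojection equivalent to $q$: I would slice the spectrum of $ap_{n_0}$ into finitely (or countably) many Borel pieces whose spectral projections $f_k=e^{ap_{n_0}}(B_k)$ are pairwise orthogonal, sum to $p_{n_0}$, and all commute with $a$. At least one partial sum $\sum_{k\leq m} f_k$ must dominate $q$ while $\sum_{k\leq m-1} f_k$ does not; comparability in the factor then gives a subprojection of the last block together with the earlier blocks that is equivalent to $q$ and still commutes with $a$ — here Lemma \ref{l00} is the natural tool to realise the equivalent copy as an actual subprojection of $p_{n_0}$ sitting between the relevant spectral projections, so that $a$-invariance is preserved. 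That yields the desired $q_1$.

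For case (ii), $ap=pa\in\mathcal{M}$ means $ap$ is a bounded self-adjoint operator in $p\mathcal{M}p$, and $q$ is infinite with $p\succeq q$. Since $q$ is infinite and $\preceq p$, by Lemma \ref{l0}(i) any countable orthogonal family dominated by $q$ has join dominated by $q$; dually, chopping the bounded spectrum of $ap$ into a countable orthogonal family of spectral projections $f_k$ summing to $p$, at least one of them must be infinite (else $p$ would be a countable join of finite projections each $\prec q$, but then $p\preceq q$ by Lemma \ref{l0}-type reasoning, giving $p\sim q$ and we reduce to working inside $q$). Pick an infinite $f_{k_0}$; it commutes with $a$ and $f_{k_0}\leq p$. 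If $f_{k_0}\succeq q$, then since $f_{k_0}$ is infinite and inside the factor, $q$ embeds as a subprojection of $f_{k_0}$, but we need that subprojection to commute with $a$: here I would use that $a$ restricted to $f_{k_0}$ is a bounded operator living on an infinite projection, split $f_{k_0}$ further by spectral projections of $af_{k_0}$ into infinitely many pieces and, absorbing all but one infinite piece, invoke Lemma \ref{l0}(iv)/(iii) to rebuild an $a$-invariant projection equivalent to $q$. If instead $q\succeq f_{k_0}$ for the chosen infinite block, iterate over blocks, using Lemma \ref{l0}(i) to control the accumulated join.

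The main obstacle I anticipate is the bookkeeping that ensures the equivalent copy $q_1$ we extract is simultaneously (a) a subprojection of $p$, (b) genuinely equivalent to $q$ (not merely $\preceq$ or $\succeq$), and (c) $a$-invariant. Maintaining $a$-invariance forces us to build $q_1$ only out of spectral projections of the commuting (bounded) parts of $a$, which limits the projections available; the resolution is to cut finely enough and then apply Lemma \ref{l00} to adjust an intermediate spectral projection down to an exact equivalent copy of $q$ sandwiched between two consecutive spectral projections, so that it still commutes with $a$. The finite/infinite case split and the ``$p\sim q$'' degenerate subcase (handled by transporting the whole problem onto $q$ via the partial isometry implementing $p\sim q$, which carries the commutation relations along) are the other points that need care.
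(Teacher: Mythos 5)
Your plan founders on its central mechanism. In case (i) you propose to locate the commuting copy of $q$ by taking consecutive spectral projections $f_{\le m-1}=\sum_{k\le m-1}f_k$ and $f_{\le m}=\sum_{k\le m}f_k$ of $ap_{n_0}$ and then using Lemma \ref{l00} to produce $r_2\sim q$ with $f_{\le m-1}<r_2<f_{\le m}$, asserting that being ``sandwiched between two consecutive spectral projections'' preserves commutation with $a$. That is false: $r_2=f_{\le m-1}+g$ with $0<g<f_m$, and $r_2$ commutes with $a$ only if $g$ commutes with $af_m$, which Lemma \ref{l00} gives no control over whatsoever (already for $a=\mathrm{diag}(1,2)$ acting on a $2$-dimensional block, a generic intermediate projection fails to commute). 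When $a$ has continuous spectrum on the last block, no amount of ``cutting finely'' removes the problem, because the issue is precisely whether the relative commutant $\{e^a\}'\cap p\mathcal{M}p$ contains a projection of the exact size of $q$. This is the actual difficulty of the lemma, and the paper resolves it by a different route: it works inside $\mathcal{A}=p(\{e^a\}'\cap\mathcal{M})p$, runs a Zorn minimization over $\{r\in P(\mathcal{A}):\tau(r)\ge\tau(q)\}$, proves that an atom of $\mathcal{A}$ is automatically an atom of $\mathcal{M}$, and only then invokes Lemma \ref{l00} \emph{in $\mathcal{M}$} -- with no commutation requirement on the intermediate projection -- to contradict atomicity of $r_0-r_1$ in $\mathcal{M}$ if the minimal trace value overshoots $\tau(q)$. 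Your plan has no counterpart of the atom argument, and without it the step where you pass from ``a subprojection equivalent to $q$ exists'' to ``an $a$-commuting one exists'' is exactly the unproved point.

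Case (ii) has the same gap in a different guise: after choosing an infinite spectral block $f_{k_0}$ with $f_{k_0}\succeq q$, you again need a subprojection of $f_{k_0}$ equivalent to $q$ that commutes with $a$, and ``splitting further and absorbing pieces via Lemma \ref{l0}(iii)/(iv)'' does not produce one when no spectral projection of $ap$ is itself equivalent to $q$ (for instance, in a non-$\sigma$-finite factor every nonzero spectral projection of $ap$ may be strictly $\succ q$). The paper sidesteps the spectral calculus entirely here: it starts from an arbitrary $q_1^0\le p$ with $q_1^0\sim q$ (no commutation assumed), sets $q_1:=\bigvee_{n\ge 0}l(a^nq_1^0)$, and checks that this saturation commutes with $a$ (boundedness of $ap$ makes the generated subspace $a$-invariant), stays under $p$, and is still equivalent to $q$ via the telescoping orthogonal decomposition together with Lemma \ref{l0}(i) and the infiniteness of $q$. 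That idea, or something playing its role, is missing from your proposal; as written, both cases rest on the unjustified claim that comparability-theoretic adjustments can be made without destroying commutation with $a$. (Minor additional point: in your degenerate subcase $p\sim q$ there is nothing to transport -- $p$ itself commutes with $a$ and serves as $q_1$; conjugation by the partial isometry implementing $p\sim q$ would in any case not carry the commutation relations along.)
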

\begin{proof}

Assume (i) holds. By the assumption $\mathcal{M}$ contains finite
projections and therefore $\mathcal{M}$ is a factor of type $I$ or
else of type $II$. Therefore $\mathcal{M}$ admits a faithful
normal semifinite trace $\tau$. Let $D$ be a commuting family
given by the spectral measure $\{e^a\}$ and let
$\mathcal{A}_1:=D^\prime\cap \mathcal{M}$. Since $ap_n=p_n a$ for
all $n\in \mathbb{N}$ and $p_n \uparrow p$, we also have $ap=pa$.
Therefore $p\in\mathcal{A}_1$. Then
$\mathcal{A}:=p\mathcal{A}_1p=\mathcal{A}_1p$ is a
$W^*$-subalgebra in $\mathcal{M}$ with the unit $p$. Let $e$ be an
atom in $\mathcal{A}$ and let $f\in P(\mathcal{M})$ be such that
$f<e$. Then for every $t\in \{e^a\}$ we have
$tp=pt\in P(\mathcal{A})$ and so $tf=t(p(ef))=((tp)e)f\in
\{0,e\}f=\{0,ef\}=\{0,f\}$, that is $tf=ft$. Therefore $f\in
P(\mathcal{A})$ and since $e$ is an atom in $\mathcal{A}$ we
conclude that $f=0$. Therefore $e$ is also an atom in
$\mathcal{M}$.

In the set $P(\mathcal{A})$ we select the subset $M(q)=\{r\in
P(\mathcal{A}):\ \tau(r)\geq \tau(q)\}$. If $\tau(q)=\tau(p)$,
then $q_1:=p\sim q$ and the proof is finished. Therefore, we
assume below that $\tau(q)<\tau(p)$. Observing that
$p_n\in\mathcal{A}_1$, $n\ge 1$ and that $\tau(q)<\infty$,
$\tau(p)>\tau(q)$, $\tau(p_n)\uparrow \tau(p)$, we see that there
exists $n\ge 1$, such that $\tau(p_n)>\tau(q)$. This shows, in
particular, that $M(q)$ is a not empty. Let $\mathfrak{C}$ be a
linearly ordered family in $M(q)$. Then the mapping
$\tau|_{\mathfrak{C}}$ into the interval $[\tau(q),\tau(p)]$ is
injective and order preserving. Since the trace $\tau$ is normal, we
have
$\tau(\bigwedge\mathfrak{C})=\bigwedge_{r\in\mathfrak{C}}\tau(r)\geq\tau(q)$.
Therefore $\bigwedge\mathfrak{C}\in M(q)$. This shows that the set
$M(q)$ satisfies Zorn's lemma assumption and therefore it has a
minimal element. Let $r_0$ be a minimal element in $M(q)$.
 If $\tau(r_0)=\tau(q)$, then we set $q_1:=r_0\sim
q$ and the proof is finished.  Suppose that $\tau(r_0)>\tau(q)$.
Moreover, consider the set $N(q)=\{r\in P(\mathcal{A}):\ \tau(r)\leq
\tau(q),\ r\leq r_0\}$. This set is not empty, in particular $0\in
N(q)$. Arguing as above, we see that $N(q)$ has a maximal element
$r_1$. We claim that $\tau(r_1)<\tau(q)$. Indeed, if it is not so,
then $\tau(r_1)=\tau(q)$ and $r_0>r_1\in M(q)$, which contradicts
the assumption that $r_0$ is minimal. Thus, $\tau(r_1)<\tau(r_0)$,
that is $r_0-r_1>0$. Observe that $r_0-r_1$ is an atom in
$\mathcal{A}$. Indeed, if there exists $0<f<r_0-r_1,\ f\in
P(\mathcal{A})$, then either $\tau(r_1)<\tau(r_1+f)\leq \tau(q)$
(which contradicts the assumption that $r_1$ is maximal), or else
$\tau(r_0)>\tau(r_1+f)\geq \tau(q)$, which contradicts the
assumption that $r_0$ is minimal. Thus, $r_0-r_1$ is an atom in
$\mathcal{A}$ and hence, as we already observed above, it is also
an atom in $\mathcal{M}$. On the other hand, it follows from Lemma
\ref{l00} that there exists $r_2\in P(\mathcal{M})$, such that
$r_2\sim q,\ r_1<r_2<r_0$. In particular, $0<r_2-r_1<r_0-r_1$,
that is $r_0-r_1$ is a not an atom. We have arrived at the
contradiction. Therefore, $q_1=r_0\sim q$.

Assume (ii) holds. By the assumption there exists a projection
$q_1^0\in \mathcal{M}$, such that $q_1^0\leq p$ and $q_1^0\sim q$.
We set $q_1^n:=l(a^n q_1^0)$ for all $n>0$,
$q_1:=\bigvee_{k=0}^\infty q_1^k$. We claim that $q_1\sim q$.
Indeed, since $q_1\ge q_1^0\sim q$, we
have $q_1\succeq q$. On the other hand, we have $q_1^n\sim
r(a^nq_1^0)\leq q_1^0\sim q$, which implies $q_1^n \preceq q$ for
all $n\geq 0$. Now, we shall show that in fact $q_1 \preceq q$.
Note that although  $q$ is an infinite projection we cannot simply refer to Lemma \ref{l0}(i) since the sequence
$\{q_1^k\}_{k\ge 0}$ does not necessarily consist of pairwise orthogonal elements.
However, representing the projection $q_1$ as
\begin{align*}
q_1&=\bigvee_{k=0}^\infty q_1^k=\sum_{m=1}^\infty(\bigvee_{k=0}^m
q_1^k-\bigvee_{k=0}^{m-1} q_1^k)+q_1^0
=\sum_{m=1}^\infty(q_1^m\vee \bigvee_{k=0}^{m-1}
q_1^k-\bigvee_{k=0}^{m-1} q_1^k) +q_1^0.
\end{align*}
and noting that $q_1^0\sim q$ and
\begin{align*}
q_1^m\vee\bigvee_{k=0}^{m-1} q_1^k-\bigvee_{k=0}^{m-1} q_1^k\sim
q_1^m
-(q_1^m\wedge \bigvee_{k=0}^{m-1} q_1^k) &\leq q_1^m \preceq q
\end{align*}
we infer via Lemma \ref{l0}(i) that $q_1 \preceq q$.  This completes the proof of the claim.

Since $ap=pa$ and
$q_1^0 \leq p$, we have $pa^n q_1^0 = a^n p q_1^0=a^n q_1^0$, and
so $q_1^n\leq p$ for all $n>0$. Hence, $q_1\leq p$. It remains to
show that $aq_1=q_1a$. The subspace $q_1(H)$ coincides with the
closure of linear span of the set $Q:=\{ a^n q_1^0(H):\ n>0\}$. By
the assumption the operator $ap$ is bounded, and since $q_1\leq
p$, the operator $aq_1$ is also bounded. Thus, for every vector
$\xi\in Q$, the vector $a\xi=aq_1\xi$ again belongs to $Q$. Again
appealing to the fact that $aq_1$ is bounded, we infer $q_1 a
q_1=a q_1$. From this we conclude that $aq_1=q_1 a$.
\end{proof}

\begin{lemma}
\label{l2} Let $\mathcal{M}$ be an infinite factor and let
$e^a(-\infty,0]\prec e^a(0,+\infty)$, $e^a(0,\lambda]\succ
e^a(-\infty,0]$ and $e^a(0,\lambda]\succ e^a(\lambda,+\infty)$ for
all $\lambda>0$. Then for all $\varepsilon>0$ there exists
$u_\varepsilon=u_\varepsilon^*\in U(\mathcal{M})$, such that
$|[a,u_\varepsilon]|\geq (1-\varepsilon)|a|$.
\end{lemma}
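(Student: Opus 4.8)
\emph{Proof plan.}\ Since $e^a\{0\}$ contributes $0$ to both sides of the claimed inequality, I would set $u_\varepsilon$ equal to $\mathbf 1$ on $e^a\{0\}H$ and argue in the reduced factor; so I may assume $e^a\{0\}=0$, and the hypotheses persist with $e^a(-\infty,0]$ replaced by $e^a(-\infty,0)$. The unitary will be built as $u_\varepsilon=\sum_i(w_i+w_i^*)$, where $\{w_i\}\subset\mathcal M$ is a family of partial isometries whose initial projections $p_i=w_i^*w_i$ are pairwise orthogonal, whose final projections $q_i=w_iw_i^*$ are pairwise orthogonal, such that all $p_i$ and $q_i$ commute with $a$ and $\sum_i(p_i+q_i)=\mathbf 1$; such a $u_\varepsilon$ is a self-adjoint unitary. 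Because the $p_i,q_i$ commute with $a$, on each block $p_iH\oplus q_iH$ we have $a(p_i+q_i)\in\mathcal M$ and a direct computation gives $p_i|[a,u_\varepsilon]|p_i=\bigl|\,w_i^*aw_i-ap_i\,\bigr|$ and $q_i|[a,u_\varepsilon]|q_i=\bigl|\,w_iaw_i^*-aq_i\,\bigr|$, while $|a|$ restricts to $|ap_i|$, $|aq_i|$ on the two summands. Hence $|[a,u_\varepsilon]|\ge(1-\varepsilon)|a|$ will follow once every pair $\{p_i,q_i\}$ lies in one of two regimes, in each of which the block inequalities hold for \emph{any} choice of $w_i$: \textbf{(A)} $ap_i\le 0\le aq_i$ (or conversely) --- then $w_i^*aw_i\ge 0$, so $w_i^*aw_i-ap_i\ge|ap_i|$; \textbf{(B)} $0\le ap_i\le\varepsilon\rho\,p_i$ and $aq_i\ge\rho\,q_i$ for some $\rho>0$ (or conversely) --- then $w_i^*aw_i\ge\rho p_i$, and since $\varepsilon(2-\varepsilon)\le 1$ we get $w_i^*aw_i-ap_i\ge\rho p_i-ap_i\ge(1-\varepsilon)ap_i$; the companion inequalities on $q_iH$ are identical in form. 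Crucially, no spectral relation between $ap_i$ and $aq_i$ is needed in either regime.

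Fix $\varepsilon\in(0,1)$ and $M\in\mathbb N$ with $2^{M-1}\ge 1/\varepsilon$, and for $k\in\mathbb Z$ set $f_k=e^a(2^k,2^{k+1}]$ and $g_k=e^a(-2^{k+1},-2^k]$; these commute with $a$, with $\sum_k f_k=e^a(0,\infty)=:e_+$, $\sum_k g_k=e^a(-\infty,0)=:e_-$ and $e_++e_-=\mathbf 1$. The hypotheses give, for every $k$: $(H_1)$ $e^a(0,2^k]=\bigvee_{j<k}f_j$ is infinite (else $e^a(2^k,\infty)\prec e^a(0,2^k]$ is finite, so $e_+$ is finite, so $\mathbf 1=e_++e_-$ is finite --- impossible, as $\mathcal M$ is infinite); $(H_2)$ $e^a(2^k,\infty)\prec e^a(0,2^k]$; $(H_3)$ $e_-\prec e^a(0,2^k]$. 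The aim is to write $\mathbf 1$ as an orthogonal sum $\sum_i(p_i+q_i)$ of $a$-invariant pairs with $p_i\sim q_i$, each pair of \emph{type} (A) --- $p_i\le g_n$ and $q_i\le f_m$ for some $n,m$ --- or of \emph{type} (B) --- $p_i\le f_j$ and $q_i\le f_m$ with $m\ge j+M$, so that $\sigma(ap_i)\subseteq[2^j,2^{j+1}]$, $\sigma(aq_i)\subseteq[2^m,\infty)$ and $2^{j+1}\le\varepsilon\,2^m$, i.e.\ regime (B) applies with $\rho=2^m$. Then $u_\varepsilon=\sum_i(w_i+w_i^*)$ is as required.

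\emph{Stage 1} (absorbing $e_-$): fix $\lambda_0>0$; by $(H_3)$, $e_-\prec e^a(0,\lambda_0]$, on which $a$ is bounded. Peeling the non-zero $g_k$ off successively, at the $g_k$-step one has $\sum_{j<k}r_j\sim\sum_{j<k}g_j\preceq e_-\prec e^a(0,\lambda_0]$, commuting with $e^a(0,\lambda_0]$, so Lemma \ref{l0}(iv) gives $e^a(0,\lambda_0]-\sum_{j<k}r_j\sim e^a(0,\lambda_0]\succeq g_k$, whence Lemma \ref{l1} (part (ii) if $g_k$ is infinite, part (i) if $g_k$ is finite) furnishes an $a$-invariant $r_k\le e^a(0,\lambda_0]-\sum_{j<k}r_j$ with $r_k\sim g_k$. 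Put $r=\sum_k r_k\le e^a(0,\lambda_0]$; the pairs $(g_k,r_k)$ are of type (A). Moreover, for every $k$, $r\wedge e^a(0,2^k]\prec e^a(0,2^k]$ (by $(H_3)$) and commutes with it, so Lemma \ref{l0}(iv) again gives $(e_+-r)\wedge e^a(0,2^k]=e^a(0,2^k]-r\wedge e^a(0,2^k]\sim e^a(0,2^k]$. Hence the $a$-invariant bands $\hat f_k=(e_+-r)\wedge f_k$, which partition $e_+-r$, inherit the \emph{balance}: for every $k$, $\bigvee_{j<k}\hat f_j$ is infinite and strictly majorizes $\bigvee_{j\ge k}\hat f_j$.

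\emph{Stage 2} (matching $e_+-r$ into itself): using only this balance, one pairs the bands $\hat f_k$ by a matching argument --- each $\hat f_k$ has its mass routed, in equivalent $a$-invariant sub-pieces (extracted via Lemma \ref{l1}, with countable bookkeeping of the orthogonal pieces handled by Lemma \ref{l0}(i),(ii)), partly into bands of index $\ge k+M$ (where $\hat f_k$ plays the ``small'' side $p_i$ of a type-(B) pair) and partly into the infinite reservoir of bands of index $\le k-M$ (where it plays the ``large'' side $q_i$), the balance at each cut ensuring that no finite block of bands is ever trapped without enough room above it, so that the process exhausts $e_+-r$. Combined with Stage 1 this completes the construction of $u_\varepsilon$. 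I expect Stage 2 to be the main obstacle: converting this ``balance'' consequence of the hypothesis $e^a(0,\lambda]\succ e^a(\lambda,+\infty)$ into an \emph{actual} matching of the infinitely many bands that respects the index-gap $M$ and consumes every band. Since band sizes can grow without bound as $k\to-\infty$, the matching cannot be a simple index shift; each band's surplus must be cascaded into far larger, far lower bands, and the delicate point is that this cascade terminates --- equivalently, that at every level $2^k$ the positive mass strictly below the level exceeds, by an infinite margin (via $(H_1)$), the positive mass from $2^k$ up, which is exactly the content of that hypothesis. By contrast Stage 1 is routine: regime (A) imposes no compatibility at all, and the hypothesis $e^a(0,\lambda]\succ e^a(-\infty,0]$ lets $e_-$ be absorbed into an arbitrarily thin positive slice without disturbing the balance needed in Stage 2 (Lemma \ref{l0}(iv)); the block identities and reductions of the first paragraph are routine verifications.
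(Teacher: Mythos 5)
Your framework (a self-adjoint unitary $u_\varepsilon=\sum_i(w_i+w_i^*)$ built from commuting pairs $p_i\sim q_i$ with $\sum_i(p_i+q_i)=\mathbf 1$, plus the blockwise estimates in regimes (A) and (B)) is exactly the skeleton of the paper's proof, and your Stage 1 and the ``balance'' computation via Lemma \ref{l0}(iv) are essentially sound. But the proof is not complete: Stage 2, which you yourself flag as the main obstacle, is precisely the heart of the lemma, and you give only a description of a hoped-for cascade (``each band's mass is routed partly up, partly down, and the process exhausts $e_+-r$'') with no argument that such a matching of infinitely many dyadic bands exists, respects the gap $M$, and leaves nothing unmatched. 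With fixed bands $f_k=e^a(2^k,2^{k+1}]$ you are forced to track leftovers of every band explicitly and to prove a genuine infinite matching/flow statement from the comparison data alone; nothing in your write-up does this, so the exhaustion condition $\sum_i(p_i+q_i)=\mathbf 1$ is unproved.

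The paper resolves exactly this point with two devices you are missing. First, it extracts from the hypothesis $e^a(0,\lambda]\succ e^a(\lambda,+\infty)$ (via Lemma \ref{l0}(i)) a sequence $\lambda_n\downarrow 0$ with $e^a(\lambda_{n+1},+\infty)\succ e^a(\lambda_n,+\infty)$, i.e.\ thresholds along which the tails grow strictly. Second, and crucially, at each step it takes as the new ``high'' projection the \emph{entire unused remainder} $p_{n+1}=e^a(\lambda_{n+1},+\infty)\prod_{k\le n}(\mathbf 1-p_k)(\mathbf 1-q_k)$: since the tails are nested, this automatically swallows all previously unmatched material above $\lambda_{n+1}$ (including low-band leftovers), so exhaustion of $e^a(0,+\infty)$ is immediate once $\lambda_n\to 0$; and Lemma \ref{l0}(ii),(iv) show $p_{n+1}\sim e^a(\lambda_{n+1},+\infty)\prec$ the unused part of $e^a(0,\varepsilon\lambda_{n+1}]$, so Lemma \ref{l1} supplies the commuting partner $q_{n+1}$. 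This replaces your open cascade problem by a one-line bookkeeping identity. A secondary gap: your appeals to Lemma \ref{l1}(i) ``if $g_k$ is finite'' are not justified as stated, since part (i) requires a sequence of \emph{finite} $a$-commuting projections increasing to the receptacle, which need not exist when the positive bands are infinite (e.g.\ in a $II_\infty$ factor); the paper's case split (a)/(b)/(c) is designed precisely so that part (i) is invoked only when all tails $e^a(\lambda,+\infty)$, $\lambda>0$, are finite, and otherwise only part (ii), with infinite targets, is ever needed.
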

\begin{proof}
Certainly the result is trivial for $\varepsilon \geq 1$ and so we
restrict ourselves to the case of $\varepsilon<1$. Our aim is to
build a decreasing sequence of positive scalars
$\{\lambda_n\}_{n=0}^\infty$ converging to zero and two sequences
$\{p_n\}_{n=0}^\infty,\ \{q_n\}_{n=0}^\infty$ of pairwise
orthogonal projections from $\mathcal{M}$, which satisfy:
\begin{enumerate}[label={(\roman*).}]
    \item $p_nq_m=0,\ ap_n=p_na,\ aq_n=q_na,\ p_n\sim q_n$ for all $n,m\geq 0$.
    \item $p_n\leq e^a(\lambda_n,+\infty),\ q_n\leq
e^a(-\infty,\varepsilon\lambda_n]$ for all $n\geq 0$ and $q_0\geq
e^a(-\infty,0]$.
    \item $\bigvee_{n=0}^\infty p_n \vee \bigvee_{n=0}^\infty q_n =
\mathbf{1}$.
\end{enumerate}
Consider the three cases:

(a). Suppose that the projection $e^a(-\infty,0]$ and all the
projections $e^a(\lambda,+\infty)$ for all  $\lambda>0$ are
finite. Then $e^a(0,\lambda]$ is the supremum of an increasing
sequence of finite projections $\{e^a(\lambda/n,\lambda]\}_{n\ge
1}$ for all $\lambda>0$. We claim that there exists $\lambda_0>0$
such that $e^a(\lambda_0,+\infty)\succ e^a(-\infty,0]$. Indeed,
$e^a(0,+\infty)=\bigvee_{n=1}^\infty e^a(1/n,+\infty)$, where, by
the assumption every projection $e^a(1/n,+\infty)$, $n\ge 1$ is
finite. Therefore, if it were $e^a(1/n,+\infty)\preceq
e^a(-\infty,0]$ for all $n\in\mathbb{N}$, we would have then
$e^a(0,+\infty)\preceq e^a(-\infty,0]$ (see e.g. \cite[Chapter V, Lemma 2.2]{Tak}), which is not the case. Thus, our claim holds and there
exists $r\in\mathcal{M}$ such that $e^a(-\infty,0]\sim
r<e^a(\lambda_0,+\infty)$. Now, we claim the existence of a
converging to zero sequence $\{\lambda_n\}_{n=0}^\infty$ of
positive numbers such that $e^a(\lambda_{n+1},+\infty)\succ
e^a(\lambda_n,+\infty)$ for all $n\geq 0$. This claim is justified
by the same argument as above: since $e^a(0,+\infty)$ is an
infinite projection and since $e^a(1/n,+\infty)\uparrow
e^a(0,+\infty)$ for $n\rightarrow \infty$, we see that for any
finite projection $q\in \mathcal{M}$ there exists $n\ge 1$, such
that $e^a(1/n,+\infty)\succ q$ (again by \cite[Chapter V, Lemma 2.2]{Tak}).
Indeed, if it were that $e^a(1/n,+\infty)\preceq q$, for all $n\ge
1$, we would then have to $e^a(0,+\infty)\preceq q$, which is
false, since the projection $e^a(0,+\infty)$ is infinite, whereas
the projection $q$ is finite.

(b). Suppose that the projection $e^a(-\infty,0]$ is finite and
there exists a number $\lambda_0>0$, such that
$e^a(\lambda_0,+\infty)$ is an infinite projection. Then there
exists a projection $r\in\mathcal{M}$ such that
$e^a(-\infty,0]\sim r<e^a(\lambda_0,+\infty)$. In addition,
$e^a(\lambda_0,+\infty)-r$ is an infinite projection.

(c). Suppose that the projection $e^a(-\infty,0]$ is infinite.
Then there exists a scalar $\lambda_0>0$, such that
$e^a(-\infty,0]\prec e^a(\lambda_0,+\infty)$. Indeed if the
opposite inequality were to hold  for every $\lambda>0$, then
Lemma \ref{l0} (i) would yield the estimate
\begin{align*}
e^a(0,+\infty)&=\sum_{k=1}^\infty
e^a(1/(k+1),1/k]+e^a(1,+\infty)\preceq e^a(-\infty,0],
\end{align*}
which contradicts the assumption  $e^a(-\infty,0]\prec
e^a(0,\lambda]\leq e^a(0,+\infty)$ for any $\lambda>0$. Therefore
there exists a projection $r\in\mathcal{M}$, such that
$e^a(-\infty,0]\sim r<e^a(\lambda_0,+\infty)$ and
$e^a(\lambda_0,+\infty)-r$ is an infinite projection.

In all these cases, let us set $p_0:=e^a(\lambda_0,+\infty)$.
Since, by the assumption, $e^a(0,\varepsilon\lambda_0]\succ
e^a(\varepsilon\lambda_0,+\infty)\geq e^a(\lambda_0,+\infty)$, we
have $e^a(0,\varepsilon\lambda_0]\succ p_0$, it follows from Lemma
\ref{l1}(i) in the case (a) and from Lemma \ref{l1}(ii) in the
cases (b) and (c) that there exists a projection
$q^1_0\in\mathcal{M}$, for which $p_0-r\sim q^1_0<
e^a(0,\varepsilon\lambda_0]$ and $aq^1_0=q^1_0 a$. Let us set
$q_0:=e^a(-\infty,0]+q^1_0$. Since $r \sim e^a(-\infty,\lambda)$
in all three cases, we have $q_0\sim p_0$.

Now, similar to the case (a), we shall show that in the cases of
(b) and (c) there also exists a decreasing sequence of positive
real numbers $\{\lambda_n\}_{n=0}^\infty$, which converges to $0$
and such that $e^a(\lambda_{n+1},+\infty)\succ
e^a(\lambda_n,+\infty)$ for all $n\geq 0$. To this end, it is
sufficient to show that for every $\lambda >0$ the inequality
$e^a(t,+\infty)\sim e^a(\lambda,+\infty)$ for all $t\in
(0,\lambda)$ does not hold. Suppose the opposite, and let a scalar
$\lambda$ be such that for all $t\in (0,\lambda)$, we have
$e^a(t,+\infty)\sim e^a(\lambda,+\infty)$. Then we have
$e^a(\lambda/(k+1),\lambda/k]\leq e^a(\lambda/(k+1),+\infty)\sim
e^a(\lambda,+\infty)$ for every $k\ge 1$, that is
$e^a(\lambda/(k+1),\lambda/k]\preceq e^a(\lambda,+\infty)$ and so
by Lemma \ref{l0} (i), it follows
\begin{align*}
e^a(0,\lambda]=\sum_{k=1}^\infty
e^a(\lambda/(k+1),\lambda/k]\preceq e^a(\lambda,+\infty).
\end{align*}
However, this contradicts our initial assumption that
$e^a(0,\lambda]\succ e^a(\lambda,+\infty)$.

Now, we are well equipped to proceed with the construction of the sequences $\{p_n\}_{n=0}^\infty,\ \{q_n\}_{n=0}^\infty$.

Suppose the projections $p_0,...,p_n;\ q_0,...,q_n$ have already been constructed. Set
\begin{align*}
p_{n+1}=e^a(\lambda_{n+1},+\infty)
\prod_{k=0}^n(\mathbf{1}-p_k)\prod_{k=0}^n(\mathbf{1}-q_k).
\end{align*}
In the case (a), all the projections $p_k,\ q_k$ for $k\leq n$ are
finite and $e^a(0,\varepsilon\lambda_{n+1}]$ is an infinite
projection. Hence, $e^a(0,\varepsilon\lambda_n]
\prod_{k=0}^n(\mathbf{1}-p_k) \prod_{k=0}^n(\mathbf{1}-q_k)$ is an
infinite projection  for all $n\ge 1$. We shall now explain to the
reader that we are now in a position to apply Lemma \ref{l1}(i)
and infer that there exists a projection $q_{n+1}\in\mathcal{M}$,
such that
\begin{align*}
p_{n+1}\sim q_{n+1}<e^a(0,\varepsilon\lambda_{n+1}]
\prod_{k=0}^n(\mathbf{1}-p_k)\prod_{k=0}^n(\mathbf{1}-q_k),
\end{align*}
and $aq_{n+1}=q_{n+1}a$. To see that Lemma \ref{l1}(i) is indeed
applicable, set $p:=e^a(0,\varepsilon\lambda_{n+1}]
\prod_{k=0}^n(\mathbf{1}-p_k)\prod_{k=0}^n(\mathbf{1}-q_k)$ and
$q:=p_{n+1}$. Observe, that here $p$ is infinite and $q$ is
finite, in particular $p\succ q$. The role of finite projections
$p_m$'s from that lemma is then played by the sequence
$\{e^a(1/m,\varepsilon\lambda_{n+1})\}_{m\ge 1}$. Observe that $e^a(1/m,\varepsilon\lambda_{n+1})\uparrow_m
e^a(0,\varepsilon\lambda_{n+1})$ and this sequence obviously commutes with the operator $a$. This completes the
construction in the case (a).

Now let us consider the cases (b) and (c). Since $p_k\leq
e^a(\lambda_k,+\infty)$, we have
$$
\sum_{k=0}^n p_k = \bigvee_{k=0}^n p_k \leq \bigvee_{k=0}^n
e^a(\lambda_k,+\infty) = e^a(\lambda_n,+\infty)
$$
and so
\begin{align*}
\sum_{k=0}^n p_k \leq e^a(\lambda_n,+\infty) \prec
e^a(\lambda_{n+1},+\infty),\ n\ge 1.
\end{align*}
Since, $q_k\sim p_k \prec e^a(\lambda_{n+1},+\infty)$ for all
$k=0,1,2,\dots n$, we obtain, via an application of Lemma \ref{l0}
(ii), that
\begin{align*}
\sum_{k=0}^n p_k + \sum_{k=0}^n q_k \prec
e^a(\lambda_{n+1},+\infty).
\end{align*}
We shall now explain that it easily follows from the preceding estimate that
the projection
\begin{align*}
p_{n+1}=e^a(\lambda_{n+1},+\infty)
\prod_{k=0}^n(\mathbf{1}-p_k)\prod_{k=0}^n(\mathbf{1}-q_k)
\end{align*}
is infinite. Indeed, assume for a moment that the projection
$e^a(\lambda_{n+1},+\infty)[\mathbf{1} - \bigvee_{k=0}^n p_k \vee
\bigvee_{k=0}^n q_k]$ is finite. In this case, $p_{n+1}\prec
e^a(\lambda_{n+1},+\infty)$ and so $e^a(\lambda_{n+1},+\infty)=
e^a(\lambda_{n+1},+\infty)[\bigvee_{k=0}^n p_k \vee
\bigvee_{k=0}^n q_k]+p_{n+1} \prec e^a(\lambda_{n+1},+\infty)$
(Lemma \ref{l0} (ii)). This contradiction shows that the
assumption just made is false.

Now, by Lemma  \ref{l0} (iv), we first deduce
that
\begin{align*}
p_{n+1}\sim e^a(\lambda_{n+1},+\infty),
\end{align*}
next, by the assumption of Lemma \ref{l2}, we have
\begin{align*}e^a(\lambda_{n+1},+\infty)\leq
e^a(\varepsilon\lambda_{n+1},+\infty)\prec
e^a(0,\varepsilon\lambda_{n+1}],
\end{align*}
and finally, again by Lemma  \ref{l0} (iv)
\begin{align*} e^a(0,\varepsilon\lambda_{n+1}]\sim
e^a(0,\varepsilon\lambda_{n+1}]
\prod_{k=0}^n(\mathbf{1}-p_k)\prod_{k=0}^n(\mathbf{1}-q_k).
\end{align*}
Thus,
\begin{align*}
p_{n+1}\prec e^a(0,\varepsilon\lambda_{n+1}]
\prod_{k=0}^n(\mathbf{1}-p_k)\prod_{k=0}^n(\mathbf{1}-q_k).
\end{align*}
and therefore, it follows  from Lemma \ref{l1}(ii), that
there exists
\begin{align*}
P(\mathcal{M})\ni q_{n+1} < e^a(0,\varepsilon\lambda_{n+1}]
\prod_{k=0}^n(\mathbf{1}-p_k)\prod_{k=0}^n(\mathbf{1}-q_k),
\end{align*}
such that $q_{n+1}\sim p_{n+1}$ and $aq_{n+1}=q_{n+1}a$.

Thus the projections $p_{n+1}$ and $q_{n+1}$ are defined and so the construction of the sequences $\{p_n\}_{n=0}^\infty,\ \{q_n\}_{n=0}^\infty$ is also completed for the cases of (b) and (c).

It is clear from the construction that for all these sequences the conditions (i) and (ii) hold. To see that the condition (iii) also holds, we first make the claim  that
\begin{align*}
e^a(-\infty,0]+e^a(\lambda_n,+\infty)\leq \bigvee_{k=0}^n p_k \vee
\bigvee_{k=0}^n q_k,\ n\ge 1.
\end{align*}
To see that the estimate above indeed holds, observe that by the construction, we have
 $e^a(-\infty,0]\leq q_0$ and that by the definition $p_{n+1}:=e^a(\lambda_{n+1},+\infty)[\mathbf{1} -
\bigvee_{k=0}^n p_k \vee \bigvee_{k=0}^n q_k]$. Therefore
$\bigvee_{k=0}^n p_k \vee \bigvee_{k=0}^n q_k \vee p_{n+1} \geq
e^a(\lambda_{n+1},+\infty)$ for all $n\ge 1$ which completes the
justification of the claim above. Now, running $n\to \infty$ we
arrive at the condition (iii).

Now, we can proceed with the construction of the unitary operator
$u_\varepsilon\in\mathcal{M}$ from the assertion.

Let $v_n\in \mathcal{M}$ be a partial isometry such that
$v_n^*v_n=p_n,\ v_nv_n^*=q_n,\ n=0,1,...$. We set
\begin{align*} u_\varepsilon
=\sum_{n=0}^\infty v_n + \sum_{n=0}^\infty v_n^*
\end{align*} (here, the sums are taken in the strong operator topology). Then, we have
\begin{align*} u_\varepsilon^*u_\varepsilon=\sum_{n=0}^\infty p_n +
\sum_{n=0}^\infty q_n = \mathbf{1},\ u_\varepsilon
u_\varepsilon^*=\sum_{n=0}^\infty q_n + \sum_{n=0}^\infty p_n =
\mathbf{1}.\end{align*}
Observe that
\begin{align*}u_\varepsilon p_n = q_n u_\varepsilon,\
u_\varepsilon q_n = p_n u_\varepsilon,\ ap_n=p_na,\ q_na=aq_n,\ n\ge 0,
\end{align*}
and so
the element $u_\varepsilon^* a u_\varepsilon$ commutes
with all the projections $p_n$ and $q_n$, $n\ge 0$. Moreover, since
for all $n\ge 0$, it holds
\begin{align*} ap_n=ae^a(\lambda_n,+\infty)p_n\geq \lambda_n
e^a(\lambda_n,+\infty)p_n =\lambda_n p_n,\\
aq_n=ae^a(-\infty,\varepsilon\lambda_n)q_n\leq
\varepsilon\lambda_n e^a(-\infty,\varepsilon\lambda_n)q_n =
\varepsilon\lambda_n q_n
\end{align*}
we obtain immediately for all such $n$'s
\begin{align*}
u_\varepsilon^* a
u_\varepsilon p_n &=u_\varepsilon^* aq_n u_\varepsilon \leq
\varepsilon\lambda_n u_\varepsilon^* q_n
u_\varepsilon=\varepsilon\lambda_n p_n,\\
u_\varepsilon^* a
u_\varepsilon q_n&=u_\varepsilon^* ap_n u_\varepsilon \geq
\lambda_n u_\varepsilon^* p_n u_\varepsilon=\lambda_n q_n.
\end{align*}
In particular, $(u_\varepsilon^* a u_\varepsilon - a)p_n\leq
\varepsilon\lambda_n p_n-\lambda_n p_n=
-\lambda_n(1-\varepsilon)p_n\leq 0$. Taking into account that
$ap_n\geq \lambda_n p_n$, we now obtain

\begin{align*}
|u_\varepsilon^* a
u_\varepsilon - a|p_n &=(a-u_\varepsilon^* a u_\varepsilon)p_n \geq
ap_n-\varepsilon\lambda_n p_n\\ &\geq ap_n-\varepsilon
ap_n=(1-\varepsilon)ap_n\\ &=(1-\varepsilon)|a|p_n.
\end{align*}
Analogously, for every $n\ge 0$, we have $(u_\varepsilon^* a u_\varepsilon - a)q_n\geq\lambda_n
q_n-\varepsilon\lambda_n q_n= (1-\varepsilon)\lambda_n q_n\geq 0$.
Therefore,
\begin{align*}
|u_\varepsilon^* a u_\varepsilon - a|q_n=(u_\varepsilon^*
a u_\varepsilon - a)q_n& \geq (1-\varepsilon)\lambda_n q_n \\ &\geq
(1-\varepsilon)a q_n.
\end{align*}
Observe that the inequalities above hold for all $n\ge 0$. If
$n>0$, then $q_n<e^a(0,\varepsilon\lambda_n]$, $q_na=aq_n$ by the
construction and so $aq_n=|a|q_n$, that is we have
$$
|u_\varepsilon^* a u_\varepsilon - a|q_n\geq (1-\varepsilon)|a|q_n.
$$

A little bit more care is required when $n=0$. In this case,
recall that $q_0=e^a(-\infty,0]+q_0^1$, where
$q_0^1<e^a(0,\varepsilon\lambda_0]$. Obviously,
$ae^a(-\infty,0]\leq 0$, and so
$ae^a(-\infty,0]=-|a|e^a(-\infty,0]$. Therefore since (see above)
$u_\varepsilon^* a u_\varepsilon q_0\geq \lambda_0 q_0$ and
$aq_0=ae^a(-\infty,0]+aq_0^1=-|a|e^a(-\infty,0]+aq_0^1$, we have
\begin{align*}
|u_\varepsilon^* a u_\varepsilon - a|q_0 & \geq (u_\varepsilon^* a
u_\varepsilon - a)q_0\geq  \lambda_0
q_0-aq_0^1+|a|e^a(-\infty,0]\\ & \geq
\lambda_0q_0^1-\varepsilon\lambda_0
q_0^1+|a|e^a(-\infty,0]=(1-\varepsilon)\lambda_0
q_0^1+|a|e^a(-\infty,0]\\  & \geq (1-\varepsilon)a
q_0^1+|a|e^a(-\infty,0]=
(1-\varepsilon)|a|q_0^1+|a|e^a(-\infty,0]\\ &\geq
(1-\varepsilon)(|a|q_0^1+|a|e^a(-\infty,0])=(1-\varepsilon)|a|q_0.
\end{align*}

Collecting all preceding inequalities, we see that for every $k\ge 0$ we have

\begin{align*}
|u_\varepsilon^* a u_\varepsilon - a|\sum_{n=0}^k (p_n+q_n)\geq (1-\varepsilon)|a|\sum_{n=0}^k (p_n+q_n)
\end{align*}
and since $\sum_{n=0}^\infty (p_n+q_n)=\mathbf{1}$, we conclude
\begin{align*}
|u_\varepsilon^* a u_\varepsilon - a|\geq (1-\varepsilon)|a|.
\end{align*}

The assertion of the lemma now follows by observing that %
%
%
$|u_\varepsilon^* a u_\varepsilon -
a|=|[a,u_\varepsilon]|$.
\end{proof}

The following lemma is somewhat similar to \cite[Proposition 5.6]{BdPS} proved there for $II_1$-factors. We however need its modification (and strengthening) for general $W^*$-factors.

\begin{lemma}
\label{l3} Suppose that there exists $\lambda\in\mathbb{R}$ and
projections $p,q\in P(\mathcal{M})$, such that $p,q\leq
e^a\{\lambda\},\ pq=0$ and $e^a(-\infty,\lambda)+p\sim
e^a(\lambda,+\infty)+q$. Then there exists an element  $u=u^*\in
U(\mathcal{M})$, satisfying \eqref{first main}.
\end{lemma}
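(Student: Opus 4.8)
The plan is to normalise the eigenvalue to $0$ and then take $u$ to be the self-adjoint unitary that interchanges the two equivalent projections of the hypothesis. First I would replace $a$ by $a-\lambda\mathbf{1}$: this translates the spectral measure ($e^{a-\lambda\mathbf{1}}(-\infty,0)=e^a(-\infty,\lambda)$, $e^{a-\lambda\mathbf{1}}(0,+\infty)=e^a(\lambda,+\infty)$, $e^{a-\lambda\mathbf{1}}\{0\}=e^a\{\lambda\}$), leaves $[a,u]$ unchanged, and turns $|a-\lambda\mathbf{1}|$ into $|(a-\lambda\mathbf{1})-0\cdot\mathbf{1}|$; so it suffices to treat $\lambda=0$. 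Thus assume $p,q\le e^a\{0\}$, $pq=0$ and $e^a(-\infty,0)+p\sim e^a(0,+\infty)+q$, and let
\[
f:=e^a(-\infty,0)+p,\qquad g:=e^a(0,+\infty)+q,\qquad h:=\mathbf{1}-f-g=e^a\{0\}-p-q .
\]
Because $p,q\le e^a\{0\}$ and $pq=0$, the projections $f,g,h$ are mutually orthogonal with $f+g+h=\mathbf{1}$; and $a$ commutes with each of them — with the three spectral projections obviously, and with $p,q$ since $p,q\le e^a\{0\}=\ker a$, so $ap=pa=aq=qa=0$. Choosing a partial isometry $w\in\mathcal M$ with $w^*w=f$, $ww^*=g$, I would set $u:=w+w^*+h$; then $u=u^*$, and $w^2=(w^*)^2=0$, $wh=hw=w^*h=hw^*=0$, $w^*w=f$, $ww^*=g$, $f+g+h=\mathbf{1}$ give $u^2=\mathbf{1}$, so $u=u^*\in U(\mathcal M)$.

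Next, since $f(H),g(H),h(H)$ are reducing subspaces for $a$, I would check \eqref{first main} (which after the normalisation reads $|[a,u]|=u^*|a|u+|a|$) separately on each of them. On $h(H)$ it is trivial: $a=0$ there and $u$ acts as the identity, so both sides vanish. On $f(H)\oplus g(H)$ I would identify $f(H)$ with $g(H)$ through the unitary $v:=w|_{f(H)}$ from $f(H)$ onto $g(H)$, and put $\alpha:=-a|_{f(H)}\ge 0$, $\beta:=a|_{g(H)}\ge 0$ (these are $\ge 0$ because $a\le 0$ on $e^a(-\infty,0)(H)$, $a\ge 0$ on $e^a(0,+\infty)(H)$, and $a=0$ on $p(H)$ and $q(H)$), so that in $2\times 2$ block form relative to $f(H)\oplus g(H)$,
\[
a=\begin{pmatrix}-\alpha&0\\ 0&\beta\end{pmatrix},\qquad
|a|=\begin{pmatrix}\alpha&0\\ 0&\beta\end{pmatrix},\qquad
u=\begin{pmatrix}0&v^*\\ v&0\end{pmatrix}.
\]
Then $[a,u]=\begin{pmatrix}0&-(\beta v+v\alpha)^*\\ \beta v+v\alpha&0\end{pmatrix}$, and a short computation using $v^*v=\mathbf{1}_{f(H)}$ and $vv^*=\mathbf{1}_{g(H)}$ (to collapse $(\beta v+v\alpha)^*(\beta v+v\alpha)=(\alpha+v^*\beta v)^2$ and $(\beta v+v\alpha)(\beta v+v\alpha)^*=(\beta+v\alpha v^*)^2$) gives
\[
[a,u]^*[a,u]=\begin{pmatrix}(\alpha+v^*\beta v)^2&0\\ 0&(\beta+v\alpha v^*)^2\end{pmatrix},\qquad
u^*|a|u+|a|=\begin{pmatrix}\alpha+v^*\beta v&0\\ 0&\beta+v\alpha v^*\end{pmatrix}.
\]
Since $\alpha+v^*\beta v\ge 0$ and $\beta+v\alpha v^*\ge 0$, taking positive square roots yields $|[a,u]|=u^*|a|u+|a|$ on $f(H)\oplus g(H)$; together with the trivial case on $h(H)$ this is \eqref{first main}.

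I do not expect a serious obstacle here: the whole point is that the hypotheses $p,q\le e^a\{\lambda\}$, $pq=0$ force $f\perp g$, which is exactly what makes $u=w+w^*+h$ a symmetry, and that $p,q$ annihilate $a$, which is what makes $a$ block-diagonal along $f(H)\oplus g(H)\oplus h(H)$. The only point requiring care is the possible unboundedness of $a$: the identities $ap=pa=0$ and the reduction of $a$ along the three subspaces should be justified through the spectral theorem rather than by formal symbol-pushing, after which the remaining $2\times 2$ block identities are entirely elementary.
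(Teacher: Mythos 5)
Your proposal is correct and follows essentially the same route as the paper: the same three orthogonal pieces $e^a(-\infty,\lambda)+p$, $e^a(\lambda,+\infty)+q$ and the remainder of $e^a\{\lambda\}$, and the same symmetry $u=w+w^*+h$ built from a partial isometry implementing the assumed equivalence. The only difference is cosmetic: you normalise $\lambda=0$ and verify $|[a,u]|=u^*|a|u+|a|$ by squaring a $2\times 2$ block matrix, whereas the paper keeps $\lambda$ and checks the identity on each of the three projections via the sign of $a-\lambda\mathbf{1}$ there; both computations are valid in $S(\mathcal{M})$.
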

\begin{proof}
Set $r:=\mathbf{1}-(e^a(-\infty,\lambda)+p+
e^a(\lambda,+\infty)+q)$. Then $p,q,r\leq e^a\{\lambda\}$ and so
 $ap=\lambda p,\ aq=\lambda q,\ ar=\lambda r$. We claim that there exists a self-adjoint unitary element $u$ such that $u(e^a(-\infty,\lambda)+p)=(e^a(\lambda,+\infty)+q)u,\ ur=r$. Indeed, since $e^a(-\infty,\lambda)+p\sim
e^a(\lambda,+\infty)+q$, there exists a partial isometry  $v$ such that
$v^*v=e^a(-\infty,\lambda)+p,\ vv^*=e^a(\lambda,+\infty)+q$. Set $u:=v+v^*+r$. We have
$u^*u=e^a(-\infty,\lambda)+p+e^a(\lambda,+\infty)+q+r=\mathbf{1}$,
$uu^*=e^a(\lambda,+\infty)+q+e^a(-\infty,\lambda)+p+r=\mathbf{1}$,
$u^*=v^*+v+r=u$. This establishes the claim. It now remains to verify that \eqref{first main} holds.

To this end, first of all observe that the operators $a$ and $u^*au$ commute with the
projections $e^a(-\infty,\lambda)+p$, $e^a(\lambda,+\infty)+q$ and
$r$. This observation guarantees that
\begin{align*}
(u^*au-a)(e^a(-\infty,\lambda)+p)&=|u^*au-a|(e^a(-\infty,\lambda)+p),\\
(u^*au-a)(e^a(\lambda,+\infty)+q)&=|u^*au-a|(e^a(\lambda,+\infty)+q)
\end{align*}
and so
\begin{align*}
|u^*au-a|(e^a(-\infty,\lambda)+p)&=
u^*a(e^a(\lambda,+\infty)+q)u-a(e^a(-\infty,\lambda)+p)\\
&=u^*a(e^a(\lambda,+\infty)+q)u-\lambda
u^*(e^a(\lambda,+\infty)+q)u\\
&+
\lambda(e^a(-\infty,\lambda)+p)-a(e^a(-\infty,\lambda)+p)\\
&=u^*|a(e^a(\lambda,+\infty)+q)-\lambda(e^a(\lambda,+\infty)+q)|u\\
&+|\lambda(e^a(-\infty,\lambda)+p)-a(e^a(-\infty,\lambda)+p)|\\
&=u^*|a-\lambda\mathbf{1}|u(e^a(-\infty,\lambda)+p)+
|a-\lambda\mathbf{1}|(e^a(-\infty,\lambda)+p).
\end{align*}
and similarly
\begin{align*}
|u^*au-a|(e^a(\lambda,+\infty)+q)&=
u^*a(e^a(-\infty,\lambda)+p)u-a(e^a(\lambda,+\infty)+q)\\
&=u^*a(e^a(-\infty,\lambda)+p)u-\lambda
u^*(e^a(-\infty,\lambda)+p)u\\
&+ \lambda (e^a(\lambda,+\infty)+q)
-a(e^a(\lambda,+\infty)+q)\\
&=-u^*|a-\lambda\mathbf{1}|u(e^a(\lambda,+\infty)+q)-
|a-\lambda\mathbf{1}|(e^a(\lambda,+\infty)+q).
\end{align*}
Finally, $(u^*au-a)r=\lambda r -\lambda r=0$, that is,
$|u^*au-a|r=0$.
We now obtain \eqref{first main} as follows

\begin{align*}|u^*au-a|&=|u^*au-a|[(e^a(-\infty,\lambda)+p)+(e^a(\lambda,+\infty)+q)+r]\\ & =
|u^*au-a|(e^a(-\infty,\lambda)+p)+|u^*au-a|(e^a(\lambda,+\infty)+q)+|u^*au-a|r\\ & =
(u^*|a-\lambda\mathbf{1}|u+|a-\lambda\mathbf{1}|)[(e^a(-\infty,\lambda)+p)+(e^a(\lambda,+\infty)+q)+r]\\ & =
u^*|a-\lambda\mathbf{1}|u+|a-\lambda\mathbf{1}|.
\end{align*}\end{proof}

The following lemma is well known. We include a short proof for convenience.

\begin{lemma}
\label{l4}  Let $I$ be an arbitrary ideal in an arbitrary
$W^*$-algebra $\mathcal{A}$. Then $x\in I\Leftrightarrow |x|\in
I\Leftrightarrow x^*\in I$. Furthermore, if $0\leq x\leq y\in I$,
then $x\in I$.
\end{lemma}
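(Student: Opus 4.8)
The plan is to deduce the three equivalences from polar decomposition and the order statement from a Douglas-type factorization; throughout I will regard $\mathcal{A}$ as a von Neumann algebra acting on a Hilbert space $H$, which costs nothing, and $I$ is a two-sided ideal.

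\emph{The equivalences.} I would start from the polar decomposition $x=u|x|$; since $\mathcal{A}$ is a $W^{*}$-algebra the partial isometry $u$ lies in $\mathcal{A}$, and consequently $|x|=u^{*}x$. If $x\in I$ then $|x|=u^{*}x\in I$ because $I$ is a left ideal, and conversely if $|x|\in I$ then $x=u|x|\in I$ again because $I$ is a left ideal; hence $x\in I\Leftrightarrow|x|\in I$. Applying this same equivalence to $x^{*}$ gives $x^{*}\in I\Leftrightarrow|x^{*}|\in I$, and since $|x^{*}|=u|x|u^{*}$ and $|x|=u^{*}|x^{*}|u$, two-sidedness of $I$ yields $|x|\in I\Leftrightarrow|x^{*}|\in I$. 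Chaining these gives $x\in I\Leftrightarrow|x|\in I\Leftrightarrow x^{*}\in I$.

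\emph{The order statement.} Suppose $0\le x\le y\in I$. The key step is to produce $w\in\mathcal{A}$ with $\|w\|\le1$ and $x^{1/2}=wy^{1/2}$; granting this, $x=x^{1/2}\bigl(x^{1/2}\bigr)^{*}=w\,y^{1/2}y^{1/2}\,w^{*}=wyw^{*}$, which lies in $I$ since $I$ is a two-sided ideal containing $y$. To build $w$, I would define it on the (not necessarily closed) subspace $y^{1/2}H$ by $w(y^{1/2}\xi):=x^{1/2}\xi$: the inequality $x\le y$ gives $\|x^{1/2}\xi\|^{2}=\langle x\xi,\xi\rangle\le\langle y\xi,\xi\rangle=\|y^{1/2}\xi\|^{2}$, so $w$ is well defined and of norm at most $1$ there, and one extends it by continuity to $\overline{y^{1/2}H}$ and by $0$ on $\ker(y^{1/2})=\bigl(y^{1/2}H\bigr)^{\perp}$.

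\emph{The main point to check.} The one slightly delicate step, which I would flag as the obstacle, is that this $w$ lies in $\mathcal{A}$ rather than merely in $B(H)$. For $z'\in\mathcal{A}'$ one has $z'x^{1/2}=x^{1/2}z'$ and $z'y^{1/2}=y^{1/2}z'$, so $z'$ leaves both $y^{1/2}H$ and $\ker(y^{1/2})$ invariant, and on a vector $y^{1/2}\xi$ one computes $wz'(y^{1/2}\xi)=w(y^{1/2}z'\xi)=x^{1/2}z'\xi=z'x^{1/2}\xi=z'w(y^{1/2}\xi)$, while $wz'$ and $z'w$ both vanish on $\ker(y^{1/2})$. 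As $wz'$ and $z'w$ are bounded and agree on $y^{1/2}H$ and on $\ker(y^{1/2})$, they agree on all of $H$, so $w\in\mathcal{A}''=\mathcal{A}$. (Alternatively, one may simply quote the operator form of Douglas' factorization lemma.) This completes the proposed argument.
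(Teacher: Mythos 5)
Your proposal is correct and follows essentially the same route as the paper: polar decomposition with the partial isometry in $\mathcal{A}$ gives the equivalences $x\in I\Leftrightarrow|x|\in I\Leftrightarrow x^*\in I$, and the order statement is deduced from a factorization $x^{1/2}=zy^{1/2}$ with $z\in\mathcal{A}$, whence $x=zyz^*\in I$. The only difference is that the paper simply cites this factorization (Dixmier, Ch.\ 11, Lemma 2) while you prove the Douglas-type lemma, including the commutant argument placing $w$ in $\mathcal{A}$, which is a correct in-line verification of the quoted fact.
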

\begin{proof}
If $x\in I$ and $x=v|x|$ is polar decomposition of $x$, then
$|x|=v^*x \in I$ and $x^*=|x|v^*\in I$.

Let $x,y\in\mathcal{A},\ 0\leq x\leq y\in I$. In this case there
exists an element $z\in\mathcal{A}$, such that $x^{1/2}=zy^{1/2}$
\cite[Ch.11, Lemma 2]{Dix}. Then $x^{1/2}=(x^{1/2})^*=y^{1/2}z^*$
and $x=x^{1/2}x^{1/2}=zy^{1/2}y^{1/2}z^*=zyz^*\in I$.
\end{proof}

We are now fully equipped to prove Theorem \ref{main}.

\begin{proof}[Proof of Theorem \ref{main}]
We concentrate first at proving assertions (i) and (ii) of of Theorem \ref{main}.
Let us consider the splitting of the set $\mathbb{R}$ of all real numbers into the following pairwise disjoint subsets:
\begin{align*}
\Lambda_- &:=\{\lambda\in\mathbb{R}:\ e^a(-\infty,\lambda)\prec
e^a(\lambda,+\infty)\},\\
\Lambda_0&:=\{\lambda\in\mathbb{R}:\
e^a(-\infty,\lambda)\sim e^a(\lambda,+\infty)\},\\
\Lambda_+&:=\{\lambda\in\mathbb{R}:\ e^a(-\infty,\lambda)\succ
e^a(\lambda,+\infty)\}.
\end{align*}

If $\Lambda_0\neq\emptyset$, then the assumptions of Lemma
\ref{l3} hold for all $\lambda\in\Lambda_0$. Thus in this case for
$a$, the assertion (i) of Theorem \ref{main} follows immediately from that lemma and hence the
assertion (ii) of that Theorem trivially holds as well.

In the rest of the proof, we shall assume that $\Lambda_0=\emptyset$.

Note that if $\lambda\in\Lambda_-$ and $\mu<\lambda$, then
\begin{align*}
e^a(-\infty,\mu)\leq e^a(-\infty,\lambda)\prec
e^a(\lambda,+\infty)\leq e^a(\mu,+\infty),
\end{align*}
that is, $\mu\in\Lambda_-$. The analogous assertion for
$\Lambda_+$ is proved similarly. These observations immediately
imply that $\Lambda_-$ and $\Lambda_+$ are connected subsets in
$\mathbb{R}$ and so for all  $\lambda_-\in\Lambda_-$ and
$\lambda_+\in\Lambda_+$, we have $\lambda_-<\lambda_+$. We shall now show that both sets $\Lambda_-$
and $\Lambda_+$ are nonempty. Suppose for a moment that $\Lambda_-= \emptyset$.
Since  $a\in S_h(\mathcal{M})$ there exists some $\lambda_1>0$, such that all projections $e^a(-\infty,\mu)$ for $\mu<-\lambda_1$ and
$e^a(\mu,+\infty)$ for $\mu>\lambda_1$ are finite, and
$e^a(-\infty,\mu) \to 0$ as $\mu\to -\infty$ and $e^a(\mu,\infty)
\to 0$ as $\mu\to\infty$. Let
$\lambda_n\downarrow -\infty,\ \lambda_1=-\mu$. By the assumption
 $\lambda_n\notin \Lambda_-$ for all $n\ge 1$. Fixing $n$ and tending $k$ to infinity
we have $e^a(-\infty,\lambda_{n+k})\succeq
e^a(\lambda_{n+k},+\infty)\geq e^a(\lambda_n,+\infty)$. However, all
projections $e^a(-\infty,\lambda_{n+k})$ are finite and
$e^a(-\infty,\lambda_{n+k})\downarrow 0$, therefore
$e^a(\lambda_n,+\infty)=0$ for any $n\in\mathbb{N}$ (see \cite[Lemma 6.11]{BdPS},
). On the other hand,
$e^a(\lambda_n,+\infty)\uparrow\mathbf{1}$. This contradiction shows that
$\Lambda_-\neq \emptyset$. The assertion  $\Lambda_+\neq
\emptyset$ is established with a similar argument.

Therefore, there exists such a unique $\lambda_0\in\mathbb{R}$, satisfying $(-\infty,\lambda_0)\subset
\Lambda_-$ and $(\lambda_0,+\infty)\subset\Lambda_+$.

Consider the case when both projections $e^a(-\infty,\lambda_0)$
and $e^a(\lambda_0,+\infty)$ are finite. Since
$\Lambda_0=\emptyset$, we have that these two projections are not
pairwise equivalent. For definiteness, let us assume
$e^a(-\infty,\lambda_0)\prec e^a(\lambda_0,+\infty)$ ( the case
when $e^a(\lambda_0,+\infty)\prec e^a(-\infty,\lambda_0)$ is
treated similarly). Then there exists $r\in P(\mathcal{M})$, such
that $e^a(-\infty,\lambda_0)\sim r < e^a(\lambda_0,+\infty)$. If
$\mathcal{M}$ is an infinite factor, then $e^a\{\lambda_0\}$ is an
infinite projection. Therefore, there exists $p\in
P(\mathcal{M})$, such that $e^a(\lambda_0,+\infty)-r\sim
p<e^a\{\lambda_0\}$. Then the pair $(a,\lambda_0)$ satisfies the
assumption of Lemma \ref{l3}. Indeed, setting $q=0$, we have
$e^a(-\infty,\lambda_0)+p\sim
r+(e^a(\lambda_0,+\infty)-r)=e^a(\lambda_0,+\infty)$, $q=0$. As above this yields the assertions (i) and (ii) of Theorem \ref{main}.

If $\mathcal{M}$ is a finite factor, then there exists a faithful
normal trace $\tau$ on $\mathcal{M}$ such that
$\tau(\mathbf{1})=1$ (that is, $\tau$ is normalized)
\cite[\S8.5]{KR2}. Certainly, we have
$\tau(e^a(-\infty,\lambda))\leq 1/2$ for all $\lambda\in\Lambda_-$
and $\tau(e^a(\lambda,+\infty))\leq 1/2$ for all
$\lambda\in\Lambda_+$. Therefore, by the normality of $\tau$ it
follows that $\tau(e^a(-\infty,\lambda_0))\leq 1/2$ and
$\tau(e^a(\lambda_0,+\infty))\leq 1/2$. Thus if we have
$e^a(-\infty,\lambda_0)\preceq e^a(\lambda_0,+\infty)$, then
 there exists a projection $p\leq e^a\{\lambda_0\}$ such that
$e^a(-\infty,\lambda_0)+p\sim e^a(\lambda_0,+\infty)$. Hence in
this case, both projections $e^a(-\infty,\lambda_0)$ and
$e^a(\lambda_0,+\infty)$ are finite, and (setting $q=0$ as above)
we see that the assumption of Lemma \ref{l3} holds.
We note, in passing, that a similar argument occurred also in
\cite[Corollary 2.7]{Hl}. So, in this case again the assertions
(i) and (ii) of Theorem \ref{main} hold.

Note that by now we have completed the proof of Theorem \ref{main} (i), (ii) for the case when
$\mathcal{M}$ is a finite factor. Moreover, we have also finished
the proof for the case when $\mathcal{M}$ is an infinite factor
and both projections $e^a(-\infty,\lambda_0)$ and
$e^a(\lambda_0,+\infty)$ are finite.

Let us now consider the case when $\mathcal{M}$ is
a purely infinite $\sigma$-finite factor. In such a factor, all
nonzero projections are infinite and are equivalent to each other
\cite[Proposition $V$.1.39]{Tak}. Therefore, in this case, we may
assume that both projections  $e^a(-\infty,\lambda_0)$ and
$e^a(\lambda_0,+\infty)$ are infinite or otherwise one of these
projections must be $0$. Our strategy is to show that in this case
$\Lambda_0\neq\emptyset$. This would yield a contradiction with
the assumption $\Lambda_0=\emptyset$ made earlier and would
complete the proof.

Suppose that $e^a(-\infty,\lambda_0)\prec e^a(\lambda_0,+\infty)$
 that is assume that $\lambda_0\in \Lambda_-$. Then
$e^a(-\infty,\lambda_0)=0$ since all nonzero projections in
$\mathcal{M}$ are equivalent. Furthermore, since for all
$\lambda>\lambda_0$ we have $\lambda\in\Lambda_+$, a similar
argument yields $e^a(\lambda,+\infty)=0$ for all such $\lambda$'s.
Thus
\begin{align*}
e^a(\lambda_0,+\infty)=
\bigvee_{\lambda>\lambda_0} e^a(\lambda,+\infty)=0,
\end{align*}
and we obtain  $0=e^a(-\infty,\lambda_0)\sim
e^a(\lambda,+\infty)=0$ that is $\lambda_0\in\Lambda_0$. However,
this contradicts our assumption that $\Lambda_0 = \emptyset$. The
case when $e^a(-\infty,\lambda_0) \succ e^a(\lambda_0,+\infty)$ is
considered analogously.  This completes the proof of
assertions (i) and (ii) of Theorem \ref{main} for the case of a
purely infinite $\sigma$-finite factor $\mathcal{M}$.

To finish the proof of the assertion (ii), it
remains to consider the case of an infinite factor $\mathcal{M}$,
that is when $\mathcal{M}$ is of type $II_\infty$ or else when
$\mathcal{M}$ is of type $I_\infty$, or else when $\mathcal{M}$ is
a non-$\sigma$-finite factor of type $III$ and when at least one
of the projections $e^a(-\infty,\lambda_0)$ and
$e^a(\lambda_0,+\infty)$ is properly infinite.

In fact, it is sufficient to consider only the case when
\begin{equation}\label{the last case}e^a(-\infty,\lambda_0)\prec
e^a(\lambda_0,+\infty)
\end{equation} that is, $\lambda_0\in\Lambda_-$. Indeed, if the
assertion (ii) of Theorem \ref{main} holds under the assumption
\eqref{the last case}, then the remaining case when:
$e^a(-\infty,\lambda_0)\succ e^a(\lambda_0,+\infty)$ (that is,
$\lambda_0\in\Lambda_+$) is reduced to \eqref{the last case} by
substituting $a$ for $-a$ and $\lambda_0$ for $-\lambda_0$.

Assume now that \eqref{the last case} holds (in this case,
 the projection $e^a(\lambda_0,+\infty)$ is necessarily infinite).

We may also further assume that the assumptions of Lemma \ref{l3} do
not hold (otherwise, there is nothing to prove).

We shall now show that  \begin{equation}\label{more of the last case}
e^a(-\infty,\lambda_0] \prec
e^a(\lambda_0,+\infty).\end{equation}
Suppose the contrary, that is that either
\begin{equation}\label{contrary} e^a(-\infty,\lambda_0] = e^a(\lambda_0,+\infty) +e^a\{\lambda_0\}
\succ e^a(\lambda_0,+\infty)\end{equation}
or else that
\begin{equation}\label{contrary-2}
e^a(-\infty,\lambda_0] \sim e^a(\lambda_0,+\infty).
\end{equation}
If \eqref{contrary-2} holds then setting $p=e^a\{\lambda_0\},\ q=0$, we have
$e^a(-\infty,\lambda_0]=e^a(-\infty,\lambda_0)+p$ we arrive at the setting when the
assumptions of Lemma \ref{l3} hold and we are done. Suppose now that \eqref{contrary} holds.  Then by Lemma \ref{l00}, it follows
from \eqref{more of the last case} that there exists a projection
$p\in P(\mathcal{M})$, for which $e^a(\lambda_0,+\infty)\sim
e^a(-\infty,\lambda_0)+p$ and $p<e^a\{\lambda_0\}$. However, this
again means that the assumptions of Lemma \ref{l3} hold (with $q=0$). This completes the proof of
\eqref{more of the last case}.

Our next claim is that
\begin{equation}\label{yet more of the last
case} e^a(\lambda_0,\lambda] \succ
e^a(-\infty,\lambda_0]+e^a(\lambda,+\infty)\end{equation} for all
$\lambda>\lambda_0$. Suppose the contrary
\begin{equation}\label{another contrary}
e^a(\lambda_0,\lambda] \preceq
e^a(-\infty,\lambda_0]+e^a(\lambda,+\infty)
\end{equation}
for some $\lambda>\lambda_0$. Setting for a moment $p:=e^a(-\infty,\lambda_0]+e^a(\lambda,+\infty),\
\mathbf{1}-p=e^a(\lambda_0,\lambda]$, we rewrite \eqref{another contrary} as $p\succeq \mathbf{1}-p$, and, thanks to
 Lemma \ref{l0}(iii), conclude that
$$
e^a(-\infty,\lambda_0]+e^a(\lambda,+\infty) \sim \mathbf{1}.
$$
However, \eqref{more of the last case} implies
$$
e^a(-\infty,\lambda_0] \prec \mathbf{1},
$$
and, by Lemma \ref{l0}(ii), we obtain $e^a(\lambda,+\infty) \sim \mathbf{1}$. However, for $\lambda>\lambda_0$ we have
$\lambda\in\Lambda_+$, and so $e^a(-\infty,\lambda) \succ
e^a(\lambda,+\infty) \sim \mathbf{1}$, that is  $e^a(-\infty,\lambda)
\succ \mathbf{1}$ which is obviously impossible. Hence, \eqref{another contrary} fails and \eqref{yet more of the last case} holds.

Now, observe that the condition \eqref{yet more of the last case} means that the element $a-\lambda_0 \mathbf{1}$ satisfied the assumptions of
Lemma \ref{l2}. The assertion (ii) of Theorem \ref{main}
now follows from that lemma. This completes the proof of the assertions (i) and (ii).

Let us prove the final assertion of Theorem
\ref{main}. To this end, let $\mathcal{M}$ be an infinite
semi-finite $\sigma$-finite factor. Fix a sequence
$\{p_n\}_{n\ge 1}$ of pairwise disjoint finite projections
$p_1,p_2,...,p_n,...$ such that $\bigvee_{n=1}^\infty p_n
=\mathbf{1}$ (any maximal family of  pairwise disjoint finite projections
in $\mathcal{M}$ is countable)
and set
$$
a:=\sum_{n=1}^\infty n^{-1}p_n.
$$
We have $a=a^*\in \mathcal{M}\cap \mathcal{F}$, where $\mathcal{F}$ is
the norm-closed ideal, generated by the elements $x\in\mathcal{F}$ such that $r(x)$
(and hence $l(x)$) is a finite projection in $\mathcal{M}$.
Moreover, the support of $a$, $s(a)$, is equal to $\mathbf{1}$.
Suppose that
\begin{equation}\label{last effort}
|[a,u]|\geq |a-\lambda\mathbf{1}|
\end{equation} for some
$\lambda\in\mathbb{C}$ and some $u\in U(\mathcal{M})$. Since $[a,u]\in \mathcal{F}$, we have by Lemma \ref{l4} that also
$a-\lambda\mathbf{1} \in \mathcal{F}$. However, the set
$\{a-\lambda\mathbf{1}:\ \lambda \in \mathbb{C}\}$ may contain at most one element can belong to $\mathcal{F}$, since $\mathcal{F}$ is a proper ideal in
$\mathcal{M}$ (see \cite[Theorem 6.8.7]{KR2}). This guarantees that $\lambda=0$, that is
$|u^*au-a|=|[a,u]|\geq |a|=a$. Let $e_+:=s((u^*au-a)_+),\
e_-:=\mathbf{1}-e_+$. We have $e_-(a-u^*au)e_-\geq e_-ae_-$, or equivalently,
$e_-u^*aue_-\leq 0$. Since $u^*au\ge 0$, we conclude $e_-u^*aue_-=0$, or equivalently,
$a^{1/2}ue_-=0$, which in turn implies $aue_-u^*=0$. Thus,
$$
a=au(e_++e_-)u^*=aue_+u^*,
$$
in particular $ue_+u^*\geq s(a)=\mathbf{1}$ and therefore $e_+=\mathbf{1}$. On the other
hand, due to the definition of $e_+$ and the inequality $|u^*au-a|\geq a$, we have $e_+(u^*au-a)e_+\geq e_+ae_+$, or equivalently, $u^*au\geq 2a$. However, the preceding inequality implies that $1=\|a\|\geq 2\|a\|=2$ and therefore is false. This contradiction shows that
$\lambda$ and $u$ satisfying \eqref{last effort} do not exist.
\end{proof}

\section{Applications of Theorem \ref{main} to derivations}

Recall that a \textit{derivation} on a complex algebra $A$ is a linear map $\delta :A\rightarrow A$ such that
\begin{equation*}
\delta \left( xy\right) =\delta \left( x\right) y+x\delta \left(
y\right) ,\ \ \ x,y\in A.
\end{equation*}
If $a\in A$, then the map $\delta _{a}:A\rightarrow A$, given by
$\delta _{a}\left( x\right) =\left[ a,x\right] $, $x\in A$, is a
derivation. A derivation of this form is called \textit{inner}.

Our first result here is somewhat similar (at least in spirit) to some results in \cite{JP, KW, PR}.

\begin{corollary}
\label{c1} Let $\mathcal{M}$ be a $W^*$ -factor and $I$ be an ideal
in $\mathcal{M}$ and let $\delta: \mathcal{M} \rightarrow I$ be a
derivation. Then there exists an element $a\in I$, such that
$\delta=\delta_a=[a,.]$.
\end{corollary}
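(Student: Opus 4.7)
The plan is to reduce the existence of an \emph{inner} implementer in $I$ to Theorem \ref{main}(ii) via Sakai's theorem that every derivation of a von Neumann algebra into itself is inner. Since $I\subseteq\mathcal{M}$, viewing $\delta$ as a derivation of $\mathcal{M}$ into $\mathcal{M}$ we obtain some $b\in\mathcal{M}$ with $\delta=\delta_b=[b,\cdot]$. Our task is to adjust $b$ by a scalar so that the adjusted element lies in $I$.

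First I would reduce to the self-adjoint case. Write $b=b_1+ib_2$ with $b_1,b_2$ self-adjoint in $\mathcal{M}$. For every $x\in\mathcal{M}$ both $\delta(x)=[b_1,x]+i[b_2,x]$ and $-\delta(x^*)^*=[b_1,x]-i[b_2,x]$ lie in $I$ (here I use Lemma \ref{l4} to see that $I$ is selfadjoint). Taking the half-sum and the half-difference yields $[b_1,x],[b_2,x]\in I$ for every $x\in\mathcal{M}$, so it is enough to prove that any self-adjoint $c\in\mathcal{M}$ with $[c,\mathcal{M}]\subseteq I$ satisfies $c-\lambda\mathbf{1}\in I$ for some real $\lambda$; applying this to $b_1$ and $b_2$ separately and then setting $a:=b-(\lambda_1+i\lambda_2)\mathbf{1}$ produces the required element $a\in I$ with $\delta_a=\delta_b=\delta$.

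The heart of the argument is then a direct application of Theorem \ref{main}(ii) to $c$. By that theorem there is $\lambda_0\in\mathbb{R}$ such that for every $\varepsilon\in(0,1)$ there is a unitary $u_\varepsilon\in U(\mathcal{M})$ with
\begin{equation*}
|[c,u_\varepsilon]|\geq(1-\varepsilon)\,|c-\lambda_0\mathbf{1}|.
\end{equation*}
By hypothesis $[c,u_\varepsilon]\in I$, and Lemma \ref{l4} gives $|[c,u_\varepsilon]|\in I$. Since $0\leq (1-\varepsilon)|c-\lambda_0\mathbf{1}|\leq |[c,u_\varepsilon]|\in I$, the second assertion of Lemma \ref{l4} yields $(1-\varepsilon)|c-\lambda_0\mathbf{1}|\in I$, hence $|c-\lambda_0\mathbf{1}|\in I$, and then $c-\lambda_0\mathbf{1}\in I$ by the first assertion of Lemma \ref{l4}. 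Applied to $b_1$ and $b_2$ this finishes the proof.

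There is no serious obstacle left once Theorem \ref{main}(ii) is in hand: the only subtleties are the self-adjoint reduction (handled by the symmetry $\delta(x^*)^*$) and the invocation of Lemma \ref{l4} to transfer the inequality through the absolute value and the scalar $(1-\varepsilon)$. The whole point of Theorem \ref{main}(ii) is precisely to make this kind of scalar-shift argument available in an arbitrary $W^*$-factor, so the corollary reduces to a short bookkeeping argument.
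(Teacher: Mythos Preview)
Your proof is correct and follows essentially the same route as the paper's own argument: invoke Sakai's theorem to get an inner implementer $b\in\mathcal{M}$, split into real and imaginary parts using $[b^*,x]=-[b,x^*]^*$ together with Lemma~\ref{l4} (so that $[b_k,\mathcal{M}]\subseteq I$), then apply Theorem~\ref{main}(ii) to each $b_k$ and use Lemma~\ref{l4} again to pass from $|[b_k,u]|\in I$ to $b_k-\lambda_k\mathbf{1}\in I$. The only cosmetic difference is that the paper fixes $\varepsilon=1/2$ rather than keeping it arbitrary.
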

\begin{proof}
Since $\delta$ is a derivation on a $W^*$-algebra, it is necessarily inner \cite[Theorem 4.1.6]{Sak}. Thus there exists an element $d\in\mathcal{M}$, such that
$\delta(\cdot)=\delta_d(\cdot)=[d,.]$. It follows from our hypothesis that
$[d,\mathcal{M}]\subseteq I$.

Using Lemma \ref{l4}, we obtain $[d^*,\mathcal{M}]=-[d,\mathcal{M}]^*\subseteq I^*=I$
and $[d_k,\mathcal{M}]\subseteq I,\ k=1,2$, where $d=d_1+i d_2$,
$d_k=d_k^*\in \mathcal{M}$, for $k=1,2$. It follows now from Theorem
\ref{main},  that there exist scalars $\lambda_1,\lambda_2\in\mathbb{R}$ and $u_1,u_2\in
U(\mathcal{M})$, such that $|[d_k,u_k]|\geq
1/2|d_k-\lambda_k\mathbf{1}|$ for $k=1,2$. Again applying Lemma \ref{l4}, we obtain
$d_k-\lambda_k\mathbf{1}\in I$, for $k=1,2$.
Setting $a:=(d_1-\lambda_1\mathbf{1})+i(d_2-\lambda_2\mathbf{1})$,
we deduce that $a\in I$ and $\delta=[a,.]$.
\end{proof}

Classical examples of ideals $I$ satisfying the
assumptions of Corollary \ref{c1} above are given by symmetric
operator ideals.

\begin{definition}\label{opideal}
If $\mathcal I$ is a $*$-ideal in a von Neumann algebra $\mathcal N$
which is complete in a norm $\|\cdot\|_{\mathcal I}$
then we will call $\mathcal I$ a symmetric
operator ideal if\\
(1) $\| S\|_{\mathcal I}\geq \| S\|$ for all $S\in \mathcal I$,\\
(2) $\| S^*\|_{\mathcal I} = \| S\|_{\mathcal I}$ for all $S\in \mathcal I$,\\
(3) $\| ASB\|_{\mathcal I}\leq \| A\| \:\| S\|_{\mathcal I}\| B\|$ for all
$S\in \mathcal I$, $A,B\in \mathcal N$.\\
Since $\mathcal I$ is an ideal in a von Neumann algebra, it follows
from I.1.6, Proposition 10 of \cite{Dix} that if $0\leq S \leq T$
and $T \in {\mathcal I}$, then $S \in {\mathcal I}$ and $\| S\|_{\mathcal I}
\leq \| T\|_{\mathcal I}$.
\end{definition}

\begin{corollary}
\label{c11} Let $\mathcal{M}$ be a $W^*$ -factor and $I$ be a
symmetric operator ideal in $\mathcal{M}$ and let $\delta:
\mathcal{M} \rightarrow I$ be a self-adjoint derivation. Then
there exists an element $a\in I$, such that
$\delta=\delta_a=[a,.]$ and that
$\|a\|_I\leq\|\delta\|_{\mathcal{M}\rightarrow I}$.
\end{corollary}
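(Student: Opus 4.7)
The plan is to reduce to an implementing element of the form $ih$ with $h=h^*$, apply Theorem \ref{main}(ii) to $h$, and then exploit the monotonicity axioms of a symmetric operator ideal.

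First I would invoke Sakai's theorem (\cite[Theorem 4.1.6]{Sak}, as in the proof of Corollary \ref{c1}) to write $\delta=[d,\cdot]$ for some $d\in\mathcal{M}$. Decomposing $d=d_1+id_2$ with $d_1,d_2$ self-adjoint, the hypothesis $\delta(x^*)=\delta(x)^*$ yields $[d+d^*,x]=0$ for every $x\in\mathcal{M}$; hence $d+d^*=2d_1$ lies in the center of $\mathcal{M}$, which is $\mathbb{C}\mathbf{1}$ since $\mathcal{M}$ is a factor. Thus $d_1=\alpha\mathbf{1}$ for some $\alpha\in\mathbb{R}$, and $\delta=[id_2,\cdot]$ with $d_2=d_2^*\in\mathcal{M}$.

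Next I would apply Theorem \ref{main}(ii) to $d_2$, obtaining $\lambda_0\in\mathbb{R}$ such that for each $\varepsilon\in(0,1)$ there exists $u_\varepsilon=u_\varepsilon^*\in U(\mathcal{M})$ with $|[d_2,u_\varepsilon]|\geq(1-\varepsilon)|d_2-\lambda_0\mathbf{1}|$. Setting $a:=i(d_2-\lambda_0\mathbf{1})$, the scalar $\lambda_0$ is central, so $[a,x]=[id_2,x]=\delta(x)$; moreover $|a|=|d_2-\lambda_0\mathbf{1}|$ and $|\delta(u_\varepsilon)|=|[a,u_\varepsilon]|=|[d_2,u_\varepsilon]|$, which gives
$$|\delta(u_\varepsilon)|\geq(1-\varepsilon)|a|. \qquad (\ast)$$
To see $a\in I$, fix any such $\varepsilon$: since $\delta(u_\varepsilon)\in I$, Lemma \ref{l4} yields $|\delta(u_\varepsilon)|\in I$; then $(\ast)$ combined with the final sentence of Lemma \ref{l4} forces $|a|\in I$, hence $a\in I$.

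For the norm estimate, the monotonicity property in Definition \ref{opideal} applied to $(\ast)$, together with $\|u_\varepsilon\|=1$ and the axiom $\|ASB\|_I\leq\|A\|\,\|S\|_I\,\|B\|$, gives
$$(1-\varepsilon)\|a\|_I=\bigl\|(1-\varepsilon)|a|\bigr\|_I\leq\bigl\||\delta(u_\varepsilon)|\bigr\|_I=\|\delta(u_\varepsilon)\|_I\leq\|\delta\|_{\mathcal{M}\rightarrow I}.$$
Letting $\varepsilon\to 0^+$ yields $\|a\|_I\leq\|\delta\|_{\mathcal{M}\rightarrow I}$. The only conceptually delicate step is the reduction to a purely imaginary implementing element, which uses the factor hypothesis crucially; after that, the result follows directly from Theorem \ref{main}(ii) and the axioms of $\|\cdot\|_I$.
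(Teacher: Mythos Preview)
Your proof is correct and follows essentially the same route as the paper's: reduce to $\delta=[id_2,\cdot]$ with $d_2=d_2^*$ via the self-adjointness hypothesis and the factor assumption, then apply Theorem~\ref{main}(ii) and the monotonicity of $\|\cdot\|_I$ to obtain the bound. The only organizational difference is that the paper first invokes Corollary~\ref{c1} to secure some implementing element in $I$ and hence $\|\delta\|_{\mathcal{M}\rightarrow I}<\infty$ before proceeding, whereas you obtain $a\in I$ directly from $(\ast)$ via Lemma~\ref{l4}; both arrive at the same estimate.
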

\begin{proof} Firstly, we observe that
$\|\delta\|_{\mathcal{M}\rightarrow I}<\infty$. Indeed, we have
$\delta=\delta_a,\ a\in I$ and therefore $\|\delta(x)\|_I=\|ax-xa\|_I\leq
\|ax\|_I+\|xa\|_I\leq 2\|a\|_I \|x\|_{\mathcal{M}}$, that is
$\|\delta\|_{\mathcal{M}\rightarrow I}\leq 2\|a\|_I<\infty$.

Let now $\delta$ be a self-adjoint derivation on $\mathcal{M}$,
that is $\delta(\cdot)=\delta_d(\cdot)=[d,\cdot]$ for some $d\in
\mathcal{M}$, such that $[d,x]^*=[d,x^*]$ for all
$x\in\mathcal{M}$. We have $x^*d^*-d^*x^*=dx^*-x^*d$, that is,
$x^*(d^*+d)=(d^*+d)x^*$ for all $x\in\mathcal{M}$. This
immediately implies $\mathfrak{Re}(d)\in Z(\mathcal{M})$ and so,
we can safely assume that $\delta=\delta_{id}=[id,.]$, where $d$
is a self-adjoint operator from $\mathcal{M}$. Fix $\varepsilon>0$
and let $\lambda_0\in\mathbb{R}$, $u_\varepsilon \in
U(\mathcal{M})$ be such that
$$|[d,u_\varepsilon]| \geq (1-\varepsilon)|d-\lambda_0\mathbf{1}|.
$$ The assumption on $(I,\|\cdot\|)$ guarantees that
$(1-\varepsilon)\|d-\lambda_0\mathbf{1}\|_I\leq
\|\delta(u_\varepsilon)\|_I\leq \|\delta\|_{\mathcal{M}\rightarrow
I}$. Since $\varepsilon$ was chosen arbitrarily, we conclude that
$\|d-\lambda_0\mathbf{1}\|_I\leq
\|\delta\|_{\mathcal{M}\rightarrow I}$. Setting
$a=i(d-\lambda_0\mathbf{1})$ completes the proof.
\end{proof}

If the von Neumann algebra $\mathcal N$ is equipped with a
faithful normal semi-finite trace $\tau$, then the set
$$\mathcal{L}_p(\mathcal{N})=\left\{S\in {\mathcal{N}}:\ \tau(|S|^p)<\infty\right\}$$
equipped  with a standard norm
$$\|S\|_{\mathcal{L}_p(\mathcal{N})}=\max\{\|S\|_{B(H)},\ \tau(|S|^p)^{1/p}\}$$
is called Schatten-von Neumann $p$-class. In the Type $I$ setting these are the usual Schatten-von Neumann ideals. The result of Corollary \ref{c1} complements results given in \cite[Section 6]{KW}.

A closely linked example is the following. Consider the ideal $\mathcal{K}_{\mathcal {N}}$ of $\tau$-compact operators in $\mathcal {N}$ (that is
the norm closed ideal generated by the projections $E\in P(\mathcal {N})$ with $\tau(E) < \infty$). In this special case, the result of Corollary \ref{c11} is analogous to the classical result that any derivation on $B(H)$ taking values in the ideal of compact operators on $H$
can be represented as $\delta_a$ with $a$ being a compact operator (see e.g. \cite[Lemma 3.2]{JP}).

We now consider analogues of Corollary \ref{c1} for ideals of (unbounded) $\tau$-measurable operators.

\begin{corollary}
\label{c2} Let $\mathcal{M}$ be a $W^*$ -factor and let
$\mathcal{A}$ be a linear subspace in $S(\mathcal{M})$, such that
$\mathcal{A}^*=\mathcal{A},\ x\in\mathcal{A} \Leftrightarrow
|x|\in\mathcal{A},\ 0<x<y\in\mathcal{A}\Rightarrow
x\in\mathcal{A}$. Fix $a\in S(\mathcal{M})$ and consider inner
derivation $\delta=\delta_a$ on the algebra $S(\mathcal{M})$ given
by $\delta(x)=[a,x]$, $x\in S(\mathcal{M})$. If
$\delta(\mathcal{M})\subseteq\mathcal{A}$, then there exists
$d\in\mathcal{A}$ such that $\delta(x)=[d,x]$.

\end{corollary}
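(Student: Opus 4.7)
The plan is to use the factor condition, which forces the center of $S(\mathcal{M})$ to be $\mathbb{C}\mathbf{1}$, so that the conclusion reduces to finding a scalar $\lambda \in \mathbb{C}$ with $a - \lambda\mathbf{1} \in \mathcal{A}$; then $d := a - \lambda \mathbf{1}$ will satisfy $[d,x] = [a,x] = \delta(x)$ for every $x \in S(\mathcal{M})$. I would split $a = a_1 + i a_2$ into its self-adjoint parts $a_j = a_j^* \in S(\mathcal{M})$, apply Theorem \ref{main}(ii) to each $a_j$ separately, and recombine using the linearity of $\mathcal{A}$.

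The first step is to promote the hypothesis $\delta_a(\mathcal{M}) \subseteq \mathcal{A}$ to $\delta_{a_j}(\mathcal{M}) \subseteq \mathcal{A}$ for $j=1,2$. For every $x \in \mathcal{M}$ the identity $[a, x^*]^* = -[a^*, x]$ combined with $\mathcal{A}^* = \mathcal{A}$ shows $\delta_{a^*}(\mathcal{M}) \subseteq \mathcal{A}$, and linearity then yields the claim for $a_1$ and $a_2$. The second step is to invoke Theorem \ref{main}(ii) on each self-adjoint $a_j$: this produces $\lambda_j \in \mathbb{R}$ such that for every $\varepsilon \in (0,1)$ there is a self-adjoint unitary $u_{j,\varepsilon} \in U(\mathcal{M})$ with
\[
|[a_j, u_{j,\varepsilon}]| \,\geq\, (1-\varepsilon)\,|a_j - \lambda_j \mathbf{1}|.
\]
Now, $[a_j, u_{j,\varepsilon}] \in \mathcal{A}$ by the first step, and the equivalence $x \in \mathcal{A} \Leftrightarrow |x| \in \mathcal{A}$ places $|[a_j, u_{j,\varepsilon}]|$ in $\mathcal{A}$. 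The order-domination axiom on $\mathcal{A}$ then forces $(1-\varepsilon)|a_j - \lambda_j \mathbf{1}| \in \mathcal{A}$; rescaling by $(1-\varepsilon)^{-1}$ and using the absolute-value equivalence once more gives $a_j - \lambda_j \mathbf{1} \in \mathcal{A}$.

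Setting $\lambda := \lambda_1 + i \lambda_2$ and $d := a - \lambda \mathbf{1} = (a_1 - \lambda_1 \mathbf{1}) + i(a_2 - \lambda_2 \mathbf{1})$, linearity of $\mathcal{A}$ yields $d \in \mathcal{A}$, while centrality of $\lambda \mathbf{1}$ in $S(\mathcal{M})$ gives $\delta_d = \delta_a = \delta$ on all of $S(\mathcal{M})$. There is no serious obstacle in the argument: everything is driven by Theorem \ref{main}(ii), and the only step requiring any care is the $*$-symmetry reduction to the self-adjoint case, which relies essentially on $\mathcal{A}^* = \mathcal{A}$; the two successive appeals to the closure properties of $\mathcal{A}$ (stability under absolute values and under order-domination) are then purely routine.
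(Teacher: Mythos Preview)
Your proof is correct and follows essentially the same route as the paper's: split $a$ into real and imaginary parts, use $\mathcal{A}^*=\mathcal{A}$ and linearity to reduce to the self-adjoint case, apply Theorem~\ref{main}(ii) to each $a_j$, and then use the absolute-value and order-hereditary properties of $\mathcal{A}$ to conclude $a_j-\lambda_j\mathbf{1}\in\mathcal{A}$. The only cosmetic difference is that the paper fixes $\varepsilon=1/2$ rather than carrying a general $\varepsilon\in(0,1)$, and is slightly terser about the closure steps you spell out.
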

\begin{proof}
Let $a=a_1+i a_2$, where $a_1=\mathfrak{Re}(a)$ and $a_2=\mathfrak{Im}(a)$. We have
$2[a_1,x]=[a+a^*,x]=[a,x]-[a,x^*]^*=\mathcal{A}-\mathcal{A}^*\subseteq\mathcal{A}$
for any $x\in\mathcal{M}$. Analogously, $[a_2,x]\in\mathcal{A}$
for any $x\in\mathcal{M}$. By Theorem \ref{main}, there is a
scalar $\lambda_k\in\mathbb{R}$ and a unitary element
$u_k\in U(\mathcal{M})$, such that $|[a_k,u_k]|\geq
1/2|a_k-\lambda_k\mathbf{1}|$ for $k=1,2$. The assumption on $\mathcal{A}$ guarantees that
$a_k-\lambda_k\mathbf{1}\in\mathcal{A}$, for $k=1,2$. Setting
$d=(a_1-\lambda_1\mathbf{1})+i(a_2-\lambda_2\mathbf{1})$, we
deduce that $d\in\mathcal{A}$ and $\delta=[d,\cdot]$.
\end{proof}

Numerous examples of absolutely solid
subspaces $\mathcal{A}$ in $S(\mathcal{M},\tau)$ satisfying the
assumptions of the preceding corollary are given by $\mathcal{M}$-bimodules of $S\left(\mathcal{M}, \tau \right)$.

\begin{definition}
A linear subspace $E$ of $S\left(\mathcal{M}, \tau \right)$, is called an $\mathcal{M}$
-bimodule of $\tau $-measurable operators if $uxv\in E$
whenever $x\in E$ and $u,v\in \mathcal{M}$. If an $\mathcal{M}$-bimodule $E$
is equipped with a (semi-) norm $\left\Vert \cdot \right\Vert _{E}$,
satisfying
\begin{equation}
\left\Vert uxv\right\Vert _{E}\leq \left\Vert u\right\Vert _{B\left(
H\right) }\left\Vert v\right\Vert _{B\left( H\right) }\left\Vert
x\right\Vert _{E},\ \ \ x\in E,\ u,v\in \mathcal{M}\text{,}  \label{ChIVeq21}
\end{equation}%
then $E$ is called a (semi-) normed $\mathcal{M}$-bimodule of $\tau $
-measurable operators.
\end{definition}

We omit a straightforward verification of the fact that every $\mathcal{M}$-bimodule
of $\tau $-measurable operators satisfies the assumption of Corollary \ref{c2}.

The best known examples of normed $\mathcal{M}$-bimodules of $S\left(\mathcal{M}, \tau \right)$ are given by
the so-called symmetric operator spaces (see e.g. \cite{DDP0, SC, KS}). We briefly recall relevant definitions.



Let $L_0$ be a space of Lebesgue measurable functions either on $(0,1)$ or on $(0,\infty)$, or on $\mathbb{N}$ finite almost everywhere (with identification $m-$a.e.). Here $m$ is Lebesgue measure or else counting measure on $\mathbb{N}$. Define $S_0$ as the subset of $L_0$ which consists of all functions $x$ such that $m(\{|x|>s\})$ is finite for some $s.$

Let $E$  be a Banach space of real-valued Lebesgue measurable functions either on $(0,1)$ or $(0,\infty)$ (with identification $m-$a.e.). $E$ is said to be {\it ideal lattice} if $x\in E$ and $|y|\leq |x|$ implies that $y\in E$ and $||y||_E\leq||x||_E.$

The ideal lattice $E\subseteq S_0$ is said to be {\it symmetric space} if for every $x\in E$ and every $y$ the assumption $y^*=x^*$ implies that $y\in E$ and $||y||_E=||x||_E.$

Here, $x^*$ denotes the non-increasing right-continuous rearrangement of $x$ given by
$$x^*(t)=\inf\{s\geq0:\ m(\{|x|\geq s\})\leq t\}.$$

If $E=E(0,1)$ is a symmetric space on $(0,1),$ then
$$L_{\infty}\subseteq E\subseteq L_1.$$

If $E=E(0,\infty)$ is a symmetric space on $(0,\infty),$ then
$$L_1\cap L_{\infty}\subseteq E\subseteq L_1+L_{\infty}.$$

Let a semi-finite von Neumann  algebra $\mathcal N$ be equipped with a  faithful normal semi-finite trace $\tau$. Let $x\in S(\mathcal{N},\tau)$.
The generalized singular value
function of $x$ is $\mu (x):t\rightarrow \mu _{t}(x),$ where, for
$0\leq t<\tau (\bf {1})$

\begin{equation*}
\mu _{t}(x)=\inf \{s\geq 0\mid \tau (e^{\left\vert x\right\vert
}(s,\infty )\leq t\}.
\end{equation*}

Let $\mathcal{E}$ be a linear subset in $S({\mathcal{N}, \tau})$ equipped with a complete norm $\|\cdot\|_{\mathcal{E}}$. We say that $\mathcal{E}$ is a \textit{symmetric operator space} (on $\mathcal{N}$) if $x\in E$ and every $y\in S({\mathcal{N}, \tau})$ the assumption $\mu(y)\leq \mu(x)$ implies that $y\in E$ and $\|y\|_\mathcal{E}\leq \|x\|_\mathcal{E}$. The fact that every symmetric operator space $\mathcal{E}$ is (an absolutely solid) $\mathcal{M}$-bimodule of $S\left(\mathcal{M}, \tau \right)$ is well known (see e.g. \cite{SC} and references therein).

There exists a strong connection between symmetric function and operator spaces.

Let $E$ be a symmetric function space on the interval $(0,1)$ (respectively, on the semi-axis or on $\mathbb{N}$) and let $\mathcal{N}$ be a type $II_1$ (respectively, $II_{\infty}$ or type $I$) von Neumann algebra. Define
$$E(\mathcal{N},\tau):=\{S\in S(\mathcal{N},\tau):\ \mu_t(S)\in E\},\ \|S\|_{E(\mathcal{N},\tau)}:=\|\mu_t(S)\|_E.$$
Main results of \cite{KS} assert that $(E(\mathcal{N},\tau), \|\cdot\|_{E(\mathcal{N},\tau)})$ is a symmetric operator
space. If $E=L_p$, $1\leq p<\infty$, then $(E(\mathcal{N},\tau), \|\cdot\|_{E(\mathcal{N},\tau)})$ coincides with the classical noncommutative $L_p$-space associated with the algebra $(\mathcal{N},\tau)$. If $\mathcal{N}$ is a semi-finite  factor, then the converse result is trivially true. That is assume for definiteness that $\mathcal{N}$ is $II_\infty$-factor and that $\mathcal{E}$ is a symmetric operator space on $\mathcal{N}$. Then,
$$
E(0,\infty):=\{f\in S_0((0,\infty)):\ f^*=\mu(x)\ \text{for some}\ x\in \mathcal{E}\},\ \|f\|_E:=\|x\|_{\mathcal{E}}
$$
is a symmetric function space on $(0,\infty)$. It is obvious that $\mathcal{E}=E(\mathcal{N},\tau)$.

We are now fully equipped to provide a full analogue of Corollaries \ref{c1},\ref{c11}.


\begin{corollary}
\label{c3} Let $\mathcal{M}$ be a semi-finite $W^*$ -factor and
let $\mathcal{E}$ be a symmetric operator space. Fix $a=a^*\in
S(\mathcal{M})$ and consider inner derivation $\delta=\delta_a$ on
the algebra $S(\mathcal{M})$ given by $\delta(x)=[a,x]$, $x\in
S(\mathcal{M})$. If $\delta(\mathcal{M})\subseteq \mathcal{E}$,
then there exists $d\in \mathcal{E}$ such that $\delta(x)=[d,x]$.
Furthermore, $\|\delta\|_{\mathcal{M}\rightarrow
\mathcal{E}}<\infty$ and the element $d\in \mathcal{E}$ can be
chosen so that
$\|d\|_{\mathcal{E}}\leq\|\delta\|_{\mathcal{M}\rightarrow
\mathcal{E}}$.
\end{corollary}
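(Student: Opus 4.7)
The plan is to apply Theorem~\ref{main}(ii) to $a$ directly and then exploit the absolute solidity of the symmetric operator space $\mathcal{E}$ to deduce both membership in $\mathcal{E}$ and the sharp norm control. Because $a=a^*\in S(\mathcal{M})$ and $\mathcal{M}$ is a $W^*$-factor, Theorem~\ref{main}(ii) supplies a scalar $\lambda_0\in\mathbb{R}$ such that for every $\varepsilon\in(0,1)$ there exists a self-adjoint unitary $u_\varepsilon\in U(\mathcal{M})$ with $|[a,u_\varepsilon]|\ge(1-\varepsilon)|a-\lambda_0\mathbf{1}|$. The candidate for the derivation-implementing element is $d:=a-\lambda_0\mathbf{1}$, which automatically satisfies $\delta_d=\delta_a=\delta$ since scalars commute with every $x\in S(\mathcal{M})$.

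To place $d$ into $\mathcal{E}$, I will chain the hypothesis $\delta(u_\varepsilon)=[a,u_\varepsilon]\in\mathcal{E}$ with the absolute solidity recalled in the discussion preceding the corollary: stability of $\mathcal{E}$ under $x\mapsto|x|$ gives $|[a,u_\varepsilon]|\in\mathcal{E}$; the domination $0\le(1-\varepsilon)|a-\lambda_0\mathbf{1}|\le|[a,u_\varepsilon]|$ combined with solidity gives $(1-\varepsilon)|a-\lambda_0\mathbf{1}|\in\mathcal{E}$; linearity then gives $|a-\lambda_0\mathbf{1}|\in\mathcal{E}$; a further application of the stability under $|\cdot|$ delivers $d\in\mathcal{E}$. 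This establishes the existence assertion. Finiteness of $\|\delta\|_{\mathcal{M}\to\mathcal{E}}$ follows from the standard $\mathcal{M}$-bimodule estimate $\|yzw\|_\mathcal{E}\le\|y\|\,\|w\|\,\|z\|_\mathcal{E}$ available in every symmetric operator space, which gives $\|\delta(x)\|_\mathcal{E}\le2\|d\|_\mathcal{E}\|x\|$ for $x\in\mathcal{M}$ and hence $\|\delta\|_{\mathcal{M}\to\mathcal{E}}\le2\|d\|_\mathcal{E}<\infty$.

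For the sharp norm bound, I take the $\|\cdot\|_\mathcal{E}$-norm in the very inequality that produced the membership. Using $\|u_\varepsilon\|=1$ together with the rearrangement identity $\|y\|_\mathcal{E}=\||y|\|_\mathcal{E}$ (so that $\|d\|_\mathcal{E}=\||a-\lambda_0\mathbf{1}|\|_\mathcal{E}$) and monotonicity of $\|\cdot\|_\mathcal{E}$ on positive operators, I obtain
\[
(1-\varepsilon)\|d\|_\mathcal{E}\le\||[a,u_\varepsilon]|\|_\mathcal{E}=\|\delta(u_\varepsilon)\|_\mathcal{E}\le\|\delta\|_{\mathcal{M}\to\mathcal{E}},
\]
and letting $\varepsilon\to 0^+$ yields $\|d\|_\mathcal{E}\le\|\delta\|_{\mathcal{M}\to\mathcal{E}}$. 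I do not anticipate any serious obstacle, as the argument combines the already proven Theorem~\ref{main}(ii) with the standard lattice properties of symmetric operator spaces. The only care point is invoking absolute solidity and the bimodule norm inequality in precisely the right form so that both the membership passage $|[a,u_\varepsilon]|\in\mathcal{E}\Rightarrow d\in\mathcal{E}$ and the passage $\varepsilon\to0^+$ in the norm estimate are fully justified.
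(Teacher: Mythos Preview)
Your proof is correct and follows essentially the same route as the paper: apply Theorem~\ref{main}(ii) to the self-adjoint element $a$, set $d=a-\lambda_0\mathbf{1}$, use the solidity/rearrangement properties of $\mathcal{E}$ to get $d\in\mathcal{E}$, and then let $\varepsilon\to0$ in $(1-\varepsilon)\|d\|_\mathcal{E}\le\|\delta(u_\varepsilon)\|_\mathcal{E}\le\|\delta\|_{\mathcal{M}\to\mathcal{E}}$ for the sharp bound. The only cosmetic differences are that the paper packages the existence step by citing Corollary~\ref{c2} and proves finiteness of $\|\delta\|_{\mathcal{M}\to\mathcal{E}}$ via a unitary decomposition of the unit ball (yielding the cruder constant $8$) rather than your direct bimodule estimate $\|\delta(x)\|_\mathcal{E}\le2\|d\|_\mathcal{E}\|x\|$.
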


\begin{proof}

The existence of $d\in \mathcal{E}$ such that $\delta(x)=[d,x]$
follows from Corollary \ref{c2}. Now, if $u\in U(\mathcal{M})$,
then $\|\delta(u)\|_{\mathcal{E}}=\|du-ud\|_{\mathcal{E}}\leq
\|du\|_{\mathcal{E}}+\|ud\|_{\mathcal{E}}=2\|d\|_{\mathcal{E}}$.
Hence, if $x\in \mathcal{M}_1=\{x\in\mathcal{M}:\ \|x\|\leq 1\}$,
then $x=\sum_{i=1}^4 \alpha_i u_i$, where $u_i\in U(\mathcal{M})$
and $|\alpha_i|\leq 1$ for $i=1,2,3,4$, and so
$\|\delta(x)\|_{\mathcal{E}}\leq \sum_{i=1}^4\|\delta(\alpha_i
u_i)\|_{\mathcal{E}}\leq 8\|d\|_{\mathcal{E}}$, that is
$\|\delta\|_{\mathcal{M}\rightarrow \mathcal{E}}\leq
8\|d\|_{\mathcal{E}}<\infty$.

The final assertion is established exactly as in the proof of
Corollary \ref{c11}.
\end{proof}

An interesting illustration of the result above can be obtained
already for the situation when the space $E$ is given by the norm
closure of the subspace $L_1\cap L_{\infty}$ in the space
$L_1+L_{\infty}$. In this case, the space $\mathcal{E}=E(\mathcal{M},\tau)$
can be equivalently described as the set of all $x\in
L_1+L_{\infty}(\mathcal{M},\tau)$ such that $\lim _{t\to \infty} \mu_t(x)=0$. This space is a natural counterpart of the ideal $\mathcal{K}_{\mathcal {M}}$ of $\tau$-compact operators in $\mathcal {M}$.

%
%
%
%
%
%
%
%
%
%
%

\end{document}